\newcommand{\size}[1]{\left \vert #1 \right \vert}
\newcommand{\ceil}[1]{\left \lceil #1 \right \rceil}
\newcommand{\floor}[1]{\left \lfloor #1 \right \rfloor}
\newcommand{\cart}{\, \Box \,}
\newcommand{\burn}{c_b}
\newcommand{\capt}{\mathrm{capt}}
\newcommand{\captb}{\mathrm{capt}_b}
\newcommand{\dist}{\mathrm{dist}}
\newtheorem{theorem}{Theorem}[section]
\newtheorem{lemma}[theorem]{Lemma}
\newtheorem{conj}[theorem]{Conjecture}
\newtheorem{prop}[theorem]{Proposition}
\newtheorem{cor}[theorem]{Corollary}
\begin{document}

\title{Cops, robbers, and burning bridges}

\author{William B. Kinnersley}
\address{Department of Mathematics, University of Rhode Island, University of Rhode Island, Kingston, RI, USA, 02881}
\email{\tt billk@uri.edu}

\author{Eric Peterson}
\address{Department of Mathematics, University of Rhode Island, University of Rhode Island, Kingston, RI, USA, 02881}
\email{\tt epeterson11492@uri.edu}

\subjclass[2010]{Primary 05C57; Secondary 05C85}
\keywords{Cops and Robbers, pursuit-evasion}  

\begin{abstract}
We consider a variant of Cops and Robbers wherein each edge traversed by the robber is deleted from the graph.  The focus is on determining the minimum number of cops needed to capture a robber on a graph $G$, called the {\em bridge-burning cop number} of $G$ and denoted $\burn(G)$.  We determine $\burn(G)$ exactly for several elementary classes of graphs and give a polynomial-time algorithm to compute $\burn(T)$ when $T$ is a tree.  We also study two-dimensional square grids and tori, as well as hypercubes, and we give bounds on the capture time of a graph (the minimum number of rounds needed for a single cop to capture a robber on $G$, provided that $\burn(G) = 1$). 
\end{abstract}

\maketitle

\begin{section}{Introduction}

The game of {\em Cops and Robbers} is a well-studied model of pursuit and evasion.  Cops and Robbers is played by two players: one controls a team of one or more {\em cops}, while the other controls a single {\em robber}.  The cops and robber all occupy vertices of a graph $G$ and take turns moving from vertex to vertex.  At the outset of the game, each cop chooses her initial position on $G$, after which the robber does the same.  (Multiple cops may occupy a single vertex simultaneously.)  Thereafter, the game proceeds in {\em rounds}, each consisting of a cop turn and a robber turn.  On the cop's turn, every cop may either remain in place or move to a neighboring vertex; on the robber's turn, he may do the same.  The cops win if some cop ever occupies the same vertex as the robber, at which time we say that cop {\em captures} the robber.  Conversely, the robber wins if he can perpetually avoid capture.  The cops and robber know each others' positions at all time.

Many variants of Cops and Robbers have been studied, each modeling pursuit and evasion in a slightly different context.  For example, the robber may move faster than the cops \cite{FGNS10,FKL12}, or the cops may have only partial information about the robber's location~\cite{CCDDFM17,DDTY13}, or the two players may have different sets of edges available to them~\cite{NN93}.  In these variants, one typically seeks to determine the minimum number of cops needed to capture a robber on a graph $G$.  In the usual model of Cops and Robbers, this quantity is deemed the {\em cop number} of $G$ and denoted $c(G)$.  For more background on Cops and Robbers, we refer the reader to~\cite{BN11}.

In this paper, we introduce and study a variant of Cops and Robbers wherein the robber, after traversing an edge, deletes that edge from the graph.  For example, perhaps the edges of our graph represent bridges joining various regions, and the robber burns each bridge as he passes over it, denying its future use both to the cops and to the robber.  (We require that the robber always burns every edge he uses; he may not elect to leave an edge intact.)  Aside from this change, the rules are the same as in the usual model of Cops and Robbers.  We refer to this game as {\em bridge-burning Cops and Robbers} and define the {\em bridge-burning cop number}, denoted $\burn(G)$, to be the minimum number of cops needed to capture a robber on $G$ in this model.

In general, $c(G)$ and $\burn(G)$ are not directly comparable, and the relationship between the two can be surprising.  As the bridge-burning game wears on, the robber deletes more and more edges from $G$ and thus has fewer escape routes.  Hence one might expect that generally $\burn(G) \le c(G)$, and indeed, sometimes this is the case.  However, in the usual model of Cops and Robbers, the robber can only play defensively, while in the bridge-burning game, he can adopt an offensive tack: if the robber can disconnect the graph and leave himself in a different component from all of the cops, then he wins.  Thus sometimes $\burn(G) > c(G)$, since there must be enough cops to capture the robber before he can pull off this feat.

In this paper, we investigate the bridge-burning game on a variety of graph classes on which the usual model of Cops and Robbers is well-understood.  In Section~\ref{sec:simple}, we determine the bridge-burning cop numbers of paths, cycles, and complete graphs.  We also generalize the elementary bound $c(G) \le \gamma(G)$ by giving two upper bounds on $\burn(G)$ in terms of domination-like parameters of $G$.  In Section~\ref{sec:trees}, we give a polynomial-time algorithm to compute $\burn(T)$ when $T$ is a tree.  In Section~\ref{sec:grids}, we examine square grids and tori.  Theorem~\ref{thm:2byn} states that when $G$ is a $2 \times n$ grid, $\burn(G) = \ceil{\frac{n+2}{9}}$, while Theorems~\ref{thm:torus} and \ref{thm:grid} state that when $G$ is an $m \times n$ square grid or torus, $\frac{mn}{121} \le \burn(G) \le (1+o(1))\frac{mn}{112}$.  We also show in Theorem~\ref{thm:hypercube} that the bridge-burning cop number of the $n$-dimensional hypercube, $Q_n$, is always 1.  Finally, in Section~\ref{sec:capt}, we briefly consider the concept of {\em capture time} -- the number of rounds needed for a single cop to win on a graph with bridge-building cop number 1.  Theorem~\ref{thm:capt_upper} shows that among all $n$-vertex graphs $G$, the capture time of $G$ is $O(n^3)$, while Theorem~\ref{thm:capt_lower} shows that there exist $n$-vertex graphs having capture time $\Omega(n^2)$.  Finally, in Section~\ref{sec:open}, we suggest some directions for future research.
\end{section}

% general stuff
\begin{section}{General Bounds}\label{sec:simple}

We begin with elementary observations about the bridge-burning game, starting with the value of $\burn(G)$ on several elementary classes of graphs.

\begin{prop}\label{prop:simple}
\mbox{}
\begin{itemize}
\item [(a)] $\displaystyle \burn(K_n) = 1$ for all $n$.
\medskip
\item [(b)] $\displaystyle \burn(C_n) = 1$ for $n \ge 3$.
%\smallskip
\item [(c)] $\displaystyle \burn(P_n) = \left \{\begin{array}{ll}1, \,\, &\text{if }n \le 5\\2, &\text{otherwise}\end{array} \right .$
\end{itemize}
\end{prop}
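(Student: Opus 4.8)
I would prove the three parts by exhibiting a winning strategy for the side claimed to win: a cop strategy for the upper bounds in (a), (b), and (c), and a robber strategy for the lower bound $\burn(P_n)\ge 2$ in (c). For (a) the robber is never given a chance to do damage: the cop starts on an arbitrary vertex, the robber must avoid her vertex to survive the outset, and on her first move the cop crosses the still-intact edge to the robber's vertex and captures him, so $\burn(K_n)=1$. For (b) I would have the cop start anywhere and simply advance on the robber along the (intact) cycle. If the robber never moves, the cop walks all the way to him and wins, so he must eventually move; but his first move burns an edge of the cycle, turning $G$ into a path $P$ on which the robber now occupies an endpoint and --- the edge he just used being gone --- is committed to moving toward the other end of $P$ (or staying put). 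At that moment the cop must lie strictly in the interior of $P$: she is not on the robber (else he is caught) and she is not on the opposite endpoint, which is precisely the vertex the robber just vacated and onto which she could not legally have stepped on her move. Hence the cop lies between the robber and the far end of $P$, and marching straight at him now wins, since the robber can only stall or advance toward her while the burned edges behind him forbid any retreat.

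For the upper bound in (c), two cops suffice on every $P_n$: station one cop at each endpoint and march both toward the center. While uncaptured the robber stays strictly between the two cops, since passing one of them means capture, so every edge he burns lies strictly between them; the first such burn splits $G$, stranding one cop on the far side of the burned edge but confining the robber together with the other cop to a finite sub-path, at whose fresh endpoint he sits committed to walking toward her --- and she catches him exactly as in the cycle argument. For $n\le 5$ a single cop already suffices: she starts at a central vertex, and a short case check shows the robber is caught within two rounds --- immediately if he starts on or adjacent to the cop, and otherwise (using $n\le 5$) he is forced to start at a far endpoint, whose unique neighbor the cop occupies on her first move, trapping him before he can burn his one escape edge.

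It remains to show $\burn(P_n)\ge 2$ for $n\ge 6$, for which I would give a robber strategy against a lone cop. Label the vertices $1,\dots,n$ and suppose the cop starts at $c$. The robber picks a start $r$ with $|r-c|\ge 2$, so the cop cannot threaten capture on her first move, and having a free neighbor on the side of $r$ away from $c$, so that he can flee: the admissible values of $r$ form the set $\{c+2,\dots,n-1\}\cup\{2,\dots,c-2\}$, which is nonempty precisely when $n\ge 6$. Starting there, the robber waits out the cop's (necessarily harmless) first move, then takes one step directly away from the cop, burning the edge between them; this disconnects $G$ with the two players in different components, so the robber wins.

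The routine-but-fiddly points in all of this are the two \emph{squeeze} arguments --- verifying in (b) and in the upper bound of (c) that the pursuing cop really does reach the robber in spite of his continued edge-burning, which one checks by tracking their separation along the relevant path and noting that the cop never needs an edge the robber has already burned --- together with confirming that $n=6$ is exactly the threshold in the last two parts, which comes down to the elementary fact that $\{2,\dots,n-1\}$ has more than three elements if and only if $n\ge 6$.
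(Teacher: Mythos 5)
Your proof is correct and follows essentially the same route as the paper's: explicit one-cop strategies for $K_n$, $C_n$, and $P_n$ with $n\le 5$, the two-endpoint cop strategy for longer paths, and a robber win obtained by burning an edge that leaves the lone cop in a different component. The only cosmetic difference is in the lower bound of (c), where the paper has the robber start at $v_2$ or $v_{n-1}$ and isolate himself on the adjacent leaf --- a special case of your more general ``cut the path anywhere at distance at least $2$ from the cop, on the far side'' move.
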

\begin{proof}
\mbox{}
\begin{itemize}
\item [(a)] To capture a robber on $K_n$, the cop starts on an arbitrary vertex; no matter where the robber starts, the cop can capture him on her first turn.\\

\item [(b)] On $C_n$, the cop starts on an arbitrary vertex and simply moves closer to the robber on each turn.  Once the robber has taken his first step (and hence burnt the corresponding edge), he finds himself at one endpoint of a path.  Subsequent moves by the robber only serve to shorten this path, so eventually the cop will reach him.\\

\item [(c)] For $n = 2$, the claim is trivial.  

For $3 \le n \le 5$, label the vertices of the path $v_1, v_2, \dots, v_n$ in order.  The cop begins on $v_3$.  If the robber begins the game on or adjacent to the cop, then the cop wins on her first turn.  Otherwise, it must be that the robber begins at one endpoint of the path.  On her first turn, the cop moves toward the robber; she is now adjacent to the robber.  The robber cannot leave his current vertex, since that would result in immediate capture; however, if he remains in place, then the cop captures him on her next turn.  Either way, the cop wins.

For $n \ge 6$, we claim that two cops are necessary and sufficient to capture the robber.  It is clear that two cops can capture the robber: one begins at each endpoint, and on each turn they both move closer to the robber.  To see that two cops are necessary, we give a strategy for the robber to avoid capture by a single cop.  Label the vertices of the path $v_1, v_2, \dots, v_n$ in order.  Since $n \ge 6$, at least one of $v_2$ and $v_{n-1}$ must not be adjacent to the cop's initial position; by symmetry suppose this is true of $v_2$.  The robber begins the game on $v_2$.  By assumption the cop cannot capture him on her first turn.  On the robber's first turn, he moves to $v_1$, thereby burning edge $v_1v_2$.  Now $v_1$ is isolated, so the cop can never reach the robber. 
\end{itemize}
\end{proof}

Note that even on such elementary graphs, the cop number and the bridge-burning cop number can differ: for $n \ge 3$ we have $c(C_n) = 2$ but $\burn(C_n) = 1$, and for $n \ge 6$ we have $c(P_n) = 1$ but $\burn(P_n) = 2$.  We will see later (in Theorem \ref{thm:trees}) that the difference between $c(G)$ and $\burn(G)$ can be arbitrarily large.

The argument in the proof of Proposition \ref{prop:simple}(c) suggests a natural heuristic strategy for the robber: attempt to move in such a way that he ends up in a different component from every cop.  Certainly if the robber accomplishes this, then he wins.  However, this is not the only way for the robber to win.  For example, in the graph shown in Figure \ref{fig:stalemate}, the robber can evade a single cop by causing a stalemate.  If the cop begins on $v$ or $y$, then the robber can safely begin on $x$; on his first turn the robber moves to $z$ and wins.  Likewise, if the cop begins on $x$ or $z$, then the cop begins on $v$ and subsequently moves to $y$.  Thus the cop must begin on $u$ or $w$; suppose without loss of generality that she begins on $u$.  The robber now begins on $w$.  If the cop moves to $v$, then the robber can move to $x$ and subsequently to $z$, thereby winning the game.  Likewise, if the cop moves to $x$, then the robber can move to $v$ and from there to $y$.  Thus the cop's only reasonable option is to remain at $u$; the robber responds by remaining at $w$.  The robber wins if the cop ever leaves $u$, so the cop must remain at $u$ perpetually and thus cannot capture the robber.

\begin{figure}[h]
\begin{center}
\begin{tikzpicture}
[inner sep=0mm, thick,
 vertex/.style={draw=black, fill=black, circle, minimum size=0.2cm},
 dot/.style={draw=black,fill=black, circle, minimum size=0.04cm},
 vertexlabel/.style={draw=none, fill=none, shape=rectangle, inner sep=2pt, font=\small},
 textnode/.style={draw=none, fill=none, shape=rectangle, inner sep=2pt},
 xscale=1.0,yscale=1.0]

\node (u) at (0,1) [vertex] {};
\node at ($(u)+(0,0.12)$) [vertexlabel, anchor=south] {$u$}; 

\node (v) at (-1,0) [vertex] {};
\node at ($(v)+(-0.06,0.08)$) [vertexlabel, anchor=south east] {$v$};

\node (w) at (0,-1) [vertex] {};
\node at ($(w)+(0,-0.12)$) [vertexlabel, anchor=north] {$w$};

\node (x) at (1,0) [vertex] {};
\node at ($(x)+(0.06,0.08)$) [vertexlabel, anchor=south west] {$x$};

\node (y) at (-2,0) [vertex] {};
\node at ($(y)+(0,0.12)$) [vertexlabel, anchor=south] {$y$};

\node (z) at (2,0) [vertex] {};
\node at ($(z)+(0,0.12)$) [vertexlabel, anchor=south] {$z$};

\draw (u) -- (v) -- (w) -- (x) -- (u);
\draw (v) -- (y);
\draw (x) -- (z);
\end{tikzpicture}
\end{center}
\caption{}
\label{fig:stalemate}
\end{figure}

It is well-known that in the usual model of Cops and Robbers, we have $c(G) \le \gamma(G)$, where $\gamma(G)$ denotes the minimum size of a dominating set -- that is, a set $S \subseteq V(G)$ such that every vertex in $G$ either belongs to $S$ or has a neighbor in $S$.  It is clear that $\burn(G) \le \gamma(G)$, since placing one cop on each vertex of a dominating set allows the cops to win on their first turn.  However, in the context of the bridge-burning game, we can strengthen this bound, and in fact we do so in two different ways.

\begin{theorem}\label{thm:dominating_cliques} 
If there exist cliques $S_1, S_2, \dots, S_k$ in $G$ such that $S_1 \cup S_2 \cup \dots \cup S_k$ is a dominating set of $G$, then $\burn(G) \le k$.
\end{theorem}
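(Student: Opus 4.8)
The plan is to prove the bound by exhibiting a strategy for $k$ cops in which each cop permanently ``guards'' one of the cliques. I would assign cop $i$ to the clique $S_i$ and start her on an arbitrary vertex of $S_i$. The strategy will maintain three invariants at the end of every round: (i) each cop $i$ occupies some vertex of $S_i$; (ii) every clique $S_i$ is still intact, meaning the robber has burned no edge lying inside it; and (iii) the robber does not occupy any vertex of $S_1 \cup \cdots \cup S_k$. Invariant (ii) is what makes guarding effective: a cop sitting on an intact clique can, in one move, relocate to \emph{any} vertex of that clique.

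The cops then play reactively. At the start of a round the robber occupies some vertex $v$; because $S_1 \cup \cdots \cup S_k$ dominates $G$, either $v$ lies in some $S_i$, or $v$ is adjacent to a vertex $w$ of some $S_i$. Fix such an $i$. If $v \in S_i$, cop $i$ steps onto $v$ and wins (legal by invariant (ii), since $v \notin S_i$ would contradict this case, and the clique is intact). Otherwise cop $i$ steps onto $w$, using invariants (i) and (ii), while every other cop stays put on her clique. Now the robber is adjacent to a cop along an unburned edge (unburned because, by invariant (iii), the robber has never stood on the clique vertex $w$), so he can neither remain in place nor move onto $w$; he must traverse some other unburned incident edge and burn it. I would then check that invariants (i)--(iii) are reestablished: (i) is immediate; invariant (iii) holds because if the robber moved onto a clique vertex he would be captured on the very next cop move (the relevant clique edge being unburned, again by (iii)); and (ii) follows from (iii), since the single edge the robber burns this round runs out of a non-clique vertex and hence lies inside no $S_i$.

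For termination, note that in each round prior to capture the robber is compelled to traverse an edge he has not used before --- a burned edge can never be re-traversed --- so there can be at most $\size{E(G)}$ such rounds; after that the robber has no legal move and is caught. This yields $\burn(G) \le k$. A couple of degenerate situations should be handled separately at the outset: if the robber starts on $\bigcup_i S_i$ he is captured on the first cop move, and if at any point he has no legal move at all he is captured on the next turn.

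The crux is not any hard computation but the careful verification that invariants (i)--(iii) are mutually consistent and are genuinely maintained, with close attention to the move order (cops move, then the robber, within each round): one must be sure that at every step the guarding cop can actually reach the needed vertex of her clique \emph{and} that the robber is simultaneously adjacent to her along an intact edge, hence forced to move and burn a fresh edge.
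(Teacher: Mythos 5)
Your proposal is correct and follows essentially the same strategy as the paper's proof: station one cop on each clique, have the cop guarding a clique adjacent to the robber step next to him, and observe that the robber must burn a fresh edge every round and so runs out of moves. Your explicit invariants (cliques remain intact, robber never enters a clique) and the $\size{E(G)}$-round termination count make rigorous two points the paper's proof treats briefly, but the underlying argument is the same.
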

\begin{proof}
Let $S_1, S_2, \dots , S_k$ be cliques whose union dominates $G$, and let $S = S_1 \cup S_2 \cup \dots S_k$.  We show how $k$ cops can capture a robber on $G$.  Label the cops $c_1,...,c_k$, and let each cop $c_i$ begin the game on any vertex in $S_i$; throughout the game, she will remain in $S_i$.  For $i \in \{1, \dots, k\}$, the robber cannot start on any vertex in $S_i$ without being captured immediately by $c_i$.  Instead, the robber must start on some vertex not in $S$.  The cops now play as follows.

Suppose the robber currently occupies vertex $v$.  Since $S$ dominates $G$, vertex $v$ must be adjacent to some vertex in $S$, say $u$; suppose $u \in S_i$.  Cop $c_i$ moves to $u$, while every other cop remains on her current vertex.  The robber cannot remain on $v$ without being captured, and he cannot move to a vertex in $S$, so on his next turn, he must flee to another vertex not in $S$.  Since $G$ has only finitely many edges, the robber cannot flee forever; he will eventually be captured.
\end{proof}

\begin{cor}\label{cor:complete_bipartite} 
For all $m$ and $n$, we have $\burn(K_{m,n}) = 1$.
\end{cor}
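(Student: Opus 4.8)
The plan is to obtain the upper bound $\burn(K_{m,n}) \le 1$ as an immediate consequence of Theorem~\ref{thm:dominating_cliques} applied with $k = 1$; the matching lower bound $\burn(K_{m,n}) \ge 1$ is trivial, since a robber facing no cops is never captured. So the entire task reduces to exhibiting a single clique of $K_{m,n}$ whose vertex set is a dominating set. Writing $A$ and $B$ for the two parts (both nonempty, as is standard), I would take any edge $\{a,b\}$ with $a \in A$ and $b \in B$; since $K_{m,n}$ is triangle-free this is in fact a maximal clique, but a clique on two vertices is all that is needed.

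The one thing to verify is that $\{a,b\}$ dominates $K_{m,n}$: every vertex of $A$ is adjacent to $b$, every vertex of $B$ is adjacent to $a$, and $a$ and $b$ lie in the set itself. Hence $S_1 = \{a,b\}$ is a dominating clique, and Theorem~\ref{thm:dominating_cliques} yields $\burn(K_{m,n}) \le 1$, which together with the trivial lower bound completes the proof.

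For concreteness one could also unwind the generic strategy in this special case: the single cop patrols the edge $ab$, always positioning herself on whichever of $a$ and $b$ lies on the side of the bipartition opposite the robber's current vertex. She can always do this, because the robber never occupies $a$ or $b$ and hence never burns the edge $ab$. After each of her moves the robber is adjacent to her and so must flee, burning a previously unburnt edge; as $K_{m,n}$ has finitely many edges, the robber is cornered after finitely many rounds and captured. There is no real obstacle here: the content is carried entirely by Theorem~\ref{thm:dominating_cliques}, and the only point requiring a moment's thought is the domination check above (equivalently, the observation that the robber has no safe starting vertex).
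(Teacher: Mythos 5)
Your proof is correct and follows exactly the paper's approach: the paper likewise observes that any pair of adjacent vertices of $K_{m,n}$ is a dominating clique and invokes Theorem~\ref{thm:dominating_cliques}. The extra unwinding of the cop's patrol of the edge $ab$ is a faithful specialization of that theorem's strategy and is consistent with the paper.
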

\begin{proof}
In $K_{m,n}$, any pair of adjacent vertices forms a dominating set; the result now follows from Theorem \ref{thm:dominating_cliques}.
\end{proof}

We can also bound $\burn(G)$ by considering {\em distance-2 domination}.  A {\em distance-2 dominating set} in $G$ is a set $S$ of vertices such that each vertex in $G$ is distance at most 2 from some member of $S$; the {\em distance-2 domination number} of $G$, denoted $\gamma_2(G)$, is the minimum size of a distance-2 dominating set.  Note that $\gamma_2(G) \le \gamma(G)$ for every graph $G$.

\begin{theorem}\label{thm:dist2_dominating}
For every connected graph $G$, we have $\burn(G) \le \gamma_2(G)+1$.
\end{theorem}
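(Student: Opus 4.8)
The plan is to use $\gamma_2(G)+1$ cops, split into $\gamma_2(G)$ \emph{guards} and one \emph{pursuer}. Fix a minimum distance-2 dominating set $S$, place one guard on each vertex of $S$, and have these guards never move. Since every vertex of $S$ stays occupied, the robber can never occupy or traverse a vertex of $S$; hence if he begins in the component $C$ of $G-S$ containing his start vertex, he is confined to $C$ forever, and the only edges ever burned are edges of $G[C]$. In particular the robber makes at most $\size{E(G[C])}$ moves that change his vertex, so after finitely many rounds he remains at a single vertex and the graph stops changing; the pursuer just needs to be positioned to finish him off at that point.

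The structural fact that lets a \emph{single} pursuer suffice is the following. Call a vertex of $C$ a \emph{portal} if it has a neighbor in $S$. Because $S$ is distance-2 dominating, every vertex of $C$ lies within distance $1$ in $G[C]$ of a portal — take the penultimate vertex of a shortest path from it to $S$, which necessarily lies in $C$ — so the portals form a dominating set of $G[C]$. Moreover no edge joining a portal to $S$ is ever burned (the robber never touches $S$), so at every moment any component of the current graph that contains a portal also contains a guard. Hence the robber can win only by \emph{disconnection}, and only by sealing himself into a \emph{portal-free} region of $C$; and since the portals dominate $G[C]$, doing this forces him to burn every edge leaving that region toward a portal, which is a great many prescribed edges.

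The pursuer's task is therefore to prevent any such sealing. She plays so as to remain, at all times, in the same component of the current graph as the robber; a component can split only when the robber crosses and burns a bridge of the current graph, so the content of this is that she can always keep herself on the robber's side of any such bridge. If she is ever adjacent to the robber at the start of a round, she simply steps onto him and wins; and once the robber can no longer move and the graph has frozen, she — being in the robber's component — walks a shortest path to him and captures him. So it suffices to exhibit a pursuer strategy maintaining this component invariant.

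I expect this last point to be the main obstacle. A naive ``chase one step behind'' does not cleanly work, since when the robber traverses an edge he burns it, so the pursuer cannot follow along it and must instead detour around, at times retreating to a guarded vertex of $S$ and re-entering $C$ through a surviving portal (a safe move, as portal-to-$S$ edges never burn). The delicate part is ensuring she is never stranded on the wrong side of a bridge the robber is about to burn; I would control this with a potential argument that combines the strictly decreasing quantity ``number of unburned edges of $G[C]$'' (which forces the game to terminate) with an invariant bounding the pursuer's distance to the robber tightly enough that he can never reach and cross an escape bridge before she catches him. The confinement and the portal-domination facts are routine; making the pursuit precise is where the real work lies.
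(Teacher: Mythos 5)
Your setup --- guards placed on a minimum distance-2 dominating set $S$ plus one pursuer --- matches the paper's, and your structural observations are all correct: the robber is confined to a component $C$ of $G-S$, the portals dominate $G[C]$, and any component of the current graph containing a portal also contains a guard. But the argument has a genuine gap exactly where you say it does: you never exhibit a pursuer strategy maintaining the invariant that she stays in the robber's component, and with \emph{static} guards this invariant is the entire content of the theorem. A robber far from the pursuer can spend his time burning the boundary edges of a portal-free region and then cross the last surviving bridge into it; nothing in your argument prevents the pursuer from being on the wrong side of that bridge at that moment. The proposed potential argument is not carried out, and its distance-controlling half is precisely the hard part: since the robber burns every edge he traverses, the pursuer cannot shadow him along his own path, and it is not clear that any useful bound on her distance to him survives the detours he can force.

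The paper closes this gap by making the guards mobile rather than static, which changes what the pursuer needs to accomplish. Each guard $c_i$ stays on or adjacent to her home vertex $v_i$; whenever the robber moves to a vertex $v$ incident to a cut-edge $uv$ of the current graph, the guard whose home lies within distance 2 of $u$ steps to a neighbor of $u$, so the robber cannot cross $uv$ without being captured on the next cop turn. A short argument (using the fact that two cut-edges at the robber's vertex cannot both be defended by the same guard, and that a guard called to defend is always currently at home) shows the guards can keep this up forever, so the graph \emph{remains connected throughout the game}. Global connectivity is what makes the lone pursuer's job trivial --- she simply walks toward the robber every turn until he runs out of moves --- and it is the ingredient your static-guard architecture does not supply. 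If you want to salvage your version, the natural fix is to let the guards leave their posts by one step to block threatened cut-edges, at which point you have essentially reconstructed the paper's proof.
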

\begin{proof}
Let $\{v_1, v_2, \dots, v_k\}$ be a distance-2 dominating set of $G$.  We give a strategy for $k+1$ cops $c_1, c_2, \dots, c_k, c^*$ to capture a robber on $G$.  For $1 \le i \le k$, cop $c_i$ begins the game on vertex $v_i$, which we refer to as $c_i$'s {\em home vertex}; cop $c^*$ begins at an arbitrary vertex.  Each vertex of $G$ is initially within distance 2 of some cop's home vertex.  The cops will maintain this property throughout the game by ensuring that the robber can never safely reach a vertex adjacent to any home vertex (and hence cannot delete any edges incident to such a vertex).  

During the game, the cops move as follows.  If on any cop turn some cop is adjacent to the robber, then she captures the robber.  Otherwise, the cops $c_1, c_2, \dots, c_k$ will ensure that $G$ remains connected at all times and that the robber cannot occupy any neighbor of a cop's home vertex (without being captured on the cops' subsequent turn).  Moreover, the cops will move so that each $c_i$ is always on or adjacent to her home.  On each turn, $c^*$ moves arbitrarily toward the robber; this will always be possible since $G$ will remain connected. 

Cops $c_1, c_2, \dots, c_k$ move in response to the robber's last move, with the aim of keeping $G$ connected.  Suppose the robber moves to some vertex $v$.  If $v$ is not incident to a cut-edge, then each $c_i$ either returns home or, if already home, remains home.  Otherwise, consider a cut-edge $uv$ incident to $v$.  By choice of the home vertices, there is some $v_i$ within distance 2 of $u$.  Every cop $c_j$ for $j \not = i$ either returns home or stays home.  If $c_i$ herself is currently home, then she moves to some neighbor of $u$, thereby preventing the robber from moving to $u$ and disconnecting the graph.  We claim that in fact $c_i$ must have been home and, thus, can defend $u$.  Suppose for the sake of contradiction that $c_i$ is not currently home.  This implies that on the preceding cop turn, she moved to defend a different cut-edge.  In particular, suppose that on the last cop turn, the robber occupied vertex $v'$, which was incident to cut-edge $u'v'$, and $c_i$ moved to a neighbor of $u'$.  Since the robber moved from $v'$ to $v$ on his last turn, the vertices $v'$ and $v$ were adjacent before the robber's move.  Moreover, $u'$ and $u$ were connected by a walk through $v_i$.  Thus when the robber occupied $v'$, the edges $u'v'$ and $uv$ shared a common cycle, so $u'v'$ must not have been a cut-edge, a contradiction.  Thus, the cops can successfully defend any given cut-edge.

We must also show that no cop can be forced to defend more than one cut-edge on any given turn and that the robber never reaches any neighbor of a home vertex (without being captured on the cops' ensuing turn).  Suppose the robber occupies some vertex $v$ at distance 2 from $v_i$.  If $v_i$ and the robber's vertex have two common neighbors $t$ and $u$, then neither $vt$ nor $vu$ can be a cut-edge; consequently, no cop can be forced to defend more than one cut-edge on a single turn.  Additionally, if $v$ and $v_i$ have two or more common neighbors, then $c_i$ must occupy $v_i$, since on her last turn she either moved there or remained there.  If instead $v$ and $v_i$ have only one common neighbor $w$, then $c_i$ either occupies $v_i$ or $w$, depending on whether $vw$ is a cut-edge.  In any case, $c_i$ clearly prevents the robber from reaching any neighbor of $v_i$. 

We claim that by playing in this manner, the cops eventually capture the robber.  The cops' strategy prevents the robber from visiting a vertex adjacent to any $v_i$, so throughout the game, every vertex of $G$ remains within distance 2 of some $v_i$.  Moreover, the $c_i$ prevent the robber from disconnecting the graph, so the graph remains connected, hence $c^*$ can execute her part of the strategy.  Finally, $c^*$'s movement ensures that the robber cannot remain still indefinitely.  Since $G$ is finite, eventually the robber will run out of safe moves and thus will be captured.
\end{proof}

In the context of Theorem~\ref{thm:dominating_cliques}, taking one vertex of every $S_i$ yields a distance-2 dominating set of size $k$.  If in fact $\gamma_2(G) = k$, then Theorem~\ref{thm:dominating_cliques} yields $\burn(G) \le k$, while Theorem~\ref{thm:dist2_dominating} yields $\burn(G) \le k+1$.  Thus Theorem~\ref{thm:dominating_cliques} may be stronger than Theorem~\ref{thm:dist2_dominating}.  However, in general this approach will not yield a minimum distance-2 dominating set, and so typically Theorem~\ref{thm:dist2_dominating} is stronger than Theorem~\ref{thm:dominating_cliques}.  We also note that Theorem~\ref{thm:dist2_dominating} is tight: the graph in Figure~\ref{fig:stalemate} has distance-2 domination number 1 but bridge-burning cop number 2.\\
\end{section}

\begin{section}{Trees}\label{sec:trees}

In this section, we study the bridge-burning game on trees.  In the usual model of Cops and Robbers, trees are easy to analyze: it is well-known that for every tree $T$, we have $c(T) = 1$.  In the bridge-burning model, things are more complicated; in fact, there exist trees with arbitrarily large bridge-burning cop number.  Below, we give a polynomial-time algorithm to determine the bridge-burning cop number of a tree.  The key idea underlying the algorithm is the same as that behind Proposition \ref{prop:simple}(c): if the robber can safely start on some vertex adjacent to a leaf, then on his next turn he can isolate himself on the leaf and thus win.  To prevent this, the cops must ensure that after their initial placement, each leaf is within distance 2 of at least one cop.

We say that a leaf of $v$ of a tree is {\em guarded} if some cop begins the game within distance 2 of $v$ and {\em unguarded} otherwise.  A cop within distance 2 of $v$ is said to {\em guard} $v$.

\begin{theorem}\label{thm:trees}
Let $T$ be a tree.  Consider the following algorithm:
\begin{enumerate}
\item Choose an arbitrary root $r$ for $T$.

\item Out of all unguarded leaves of $T$, let $v$ be one furthest from the root.

\item If $v = r$ or $v \in N(r)$, place a cop at $r$; otherwise, place a cop at the grandparent of $v$.\looseness=-1

\item Repeat steps 2 and 3 until all leaves of $T$ have been guarded. 
\end{enumerate}
If $N$ denotes the number of cops placed by the algorithm, then $\burn(T) = N$.  Moreover, this algorithm can be executed in polynomial time.
\end{theorem}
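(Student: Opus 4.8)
The plan is to prove this in two halves: first that $N$ cops suffice (the algorithm's placement leads to a win), and second that $N$ cops are necessary (any placement of fewer than $N$ cops allows the robber to win). Throughout, the unifying principle is the observation already flagged in the text: the cops must, after the initial placement, keep every leaf within distance $2$ of some cop, since otherwise the robber starts at the neighbor of an unguarded leaf and isolates himself on his first move. So the crux is to show that (a) once every leaf is initially guarded, one cop per "guarding block" can actually finish the job, and (b) the greedy algorithm uses the fewest cops to achieve initial guarding.

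For sufficiency, I would first argue that after the algorithm terminates, every leaf is within distance $2$ of a placed cop (immediate from the stopping condition). Then I would describe how the $N$ cops play. The key structural fact is that a cop placed at the grandparent $g$ of a deep unguarded leaf $v$ guards a whole "bottom piece" of the tree hanging below $g$: in a rooted tree, all leaves at distance $\le 2$ from $g$ are exactly the leaves in the subtree rooted at $g$ (since $v$ was chosen furthest from the root, nothing in that subtree extends more than $2$ below $g$). I would have each such cop sit at (or patrol near) her vertex, using essentially the Proposition~\ref{prop:simple}(c) / Theorem~\ref{thm:dist2_dominating} idea: she prevents the robber from ever reaching a neighbor of any leaf she guards, so no leaf-edge ever burns. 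Meanwhile, because $T$ is a tree, \emph{every} non-leaf edge is a cut-edge, and the robber wins in a tree only by isolating himself (there are no cycles to create a stalemate as in Figure~\ref{fig:stalemate}), so in fact the relevant winning condition for the robber is precisely "isolate yourself on a leaf." If the cops forever deny every leaf-neighbor, the robber can never complete an isolation; since they can also always chase him toward a leaf (one of the cops can advance toward him along the unique path, the graph staying connected in the region she controls), finiteness of the edge set forces capture. I'd need to be a little careful to show the cops can simultaneously guard all their leaves \emph{and} make progress, but the tree structure makes this clean — each cop is responsible for a disjoint "low" region and the robber, confined away from all leaf-neighbors, is squeezed.

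For necessity, I would show that if the robber faces only $N-1$ cops, he can win. The engine is a charging/LP-duality-style argument: I want to exhibit a set of $N$ leaves of $T$ that are "pairwise far enough apart" that no single cop can guard two of them, i.e., no vertex of $T$ is within distance $2$ of two of these leaves. If such a set of $N$ leaves exists, then by pigeonhole $N-1$ cops leave some leaf $v$ of that set unguarded, the robber starts at $v$'s neighbor and isolates himself — done. So the real content is: the greedy algorithm, each time it places a cop, can be associated with a leaf $v$ (the furthest unguarded leaf it processed) such that the leaves associated to distinct cop-placements are pairwise "un-co-guardable." This is where the choice of $v$ as \emph{furthest from the root} does the work: when we place a cop at $\mathrm{grandparent}(v)$, that cop guards exactly the leaves in the subtree below $\mathrm{grandparent}(v)$, all of which get removed from consideration; the next furthest unguarded leaf then lies outside that subtree, so it is too far (its distance to anything near the first $v$ exceeds the threshold). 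I expect to need a short lemma: if $v$ is a deepest unguarded leaf and $v'$ is any leaf not guarded by a cop at $v$'s grandparent, then no vertex of $T$ is within distance $2$ of both $v$ and $v'$. Establishing this lemma carefully — handling the boundary cases where the algorithm places a cop at the root $r$ instead of a grandparent (because $v=r$ or $v\in N(r)$) — is the main obstacle, since there the "guarded region" is the whole remaining tree and one must check that this case only arises when it genuinely must.

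Finally, polynomial-time execution is routine and I would dispatch it briefly: rooting the tree, computing all pairwise-needed distances or just BFS levels, and repeatedly extracting the deepest unguarded leaf and marking newly guarded leaves can all be done in, say, $O(n^2)$ time (or better), since each iteration places one cop and there are at most $n$ cops. So the heart of the write-up is the necessity direction and, within it, the un-co-guardability lemma driven by the "furthest from the root" selection rule.
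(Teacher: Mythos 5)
Your necessity direction takes a genuinely different route from the paper, and it is essentially workable. The paper proves $\burn(T)\ge N$ by a greedy exchange argument: for the deepest unguarded leaf $v$ with parent $u$ and grandparent $t$, any cop placed at $u$ or at a child of $u$ guards only leaves that a cop at $t$ would also guard, so some optimal leaf-guarding placement agrees with the algorithm, and one iterates. You instead propose a packing certificate: the leaves $v_1,\dots,v_N$ selected by the algorithm are pairwise un-co-guardable, and pigeonhole finishes it. This does work, but your ``un-co-guardability'' lemma is false as literally stated: if $v'$ is a leaf hanging one level below a sibling of $v$, then $v'$ is not within distance $2$ of $v$'s grandparent, yet the parent $u$ of $v$ is within distance $2$ of both $v$ and $v'$. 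The lemma is rescued by the hypothesis you get for free in the application --- $v'=v_j$ is still unguarded when $v=v_i$ is selected as a \emph{deepest} unguarded leaf, so $\mathrm{depth}(v')\le\mathrm{depth}(v)$, which kills that configuration (the only vertices within distance $2$ of a leaf $v$ in a tree are $v$, its parent, its grandparent, and its siblings, and one checks each against the two conditions). With that hypothesis made explicit, and with the boundary case where the last cop goes to the root handled separately, your duality-style argument is a clean alternative to the paper's exchange argument.

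The sufficiency direction is where you have a genuine gap. Your plan --- each cop sits at (or patrols near) her post to deny the robber every leaf-neighbor, while ``one of the cops'' somehow also advances --- does not resolve the tension you yourself flag, and that tension is the entire content of this half. If the cops literally hold their posts, the robber holds his position too and is never captured; if a cop leaves her post to chase, you have not shown her leaves stay protected; and ``deny all leaf-neighbors'' is not even the right target, since the robber wins by burning \emph{any} edge that separates him into a cop-free component, not only by stepping onto a leaf. The paper's resolution is a different idea: \emph{every} cop simply moves one step toward the robber on every turn, and the invariant maintained is that for each leaf of $T$ in the robber's current component, the path from the robber to that leaf contains a cop. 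This invariant survives because, in a tree, a cop moving toward the robber stays on the path between the robber and everything she previously separated him from, while any robber move either keeps such a cop on the path or disconnects the robber from that leaf entirely; since the robber's component always contains a leaf of $T$, it always contains a cop, and finiteness forces capture. That invariant (or an equivalent observation) is the missing ingredient in your write-up; without it, the ``squeeze'' you describe is an assertion rather than an argument.
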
 
\begin{proof}
It is clear that the algorithm can be executed in polynomial time, so we need only show that $\burn(T) = N$.  We first show that $\burn(T) \geq N$, i.e. that we need at least $N$ cops to capture a robber on $T$.  If some leaf $v$ of $T$ is unguarded, then the robber can begin the game on some neighbor of $v$ and, on his first turn, move to $v$.  This puts the robber on the isolated vertex $v$, so the cops can never capture him.  Hence, the cops must ensure that after their initial placement, every leaf of $T$ is guarded.  We claim that this requires at least $N$ cops.

Consider the leaf $v$ chosen in the first iteration of step 2 of the algorithm.  If $v=r$ or $v \in N(r)$, then all unguarded leaves are within distance 1 of $r$, so placing a cop on $r$ guards all leaves.  Suppose instead that $v$ is at least distance 2 from $r$.  Let $u$ and $t$ be the parent and grandparent of $v$, respectively.  The cops must ensure that some cop guards $v$, which requires placing a cop at $t$, $u$, or some child of $u$.  By choice of $v$, no child of $u$ has any unguarded leaves as descendants.  Consequently, any unguarded leaf that would be guarded by a cop at $u$ or some child of $u$ would also be guarded by a cop at $t$.  Thus, some optimal cop placement (i.e. one that guards all leaves using the fewest possible cops) places a cop at $t$, just as the algorithm does.  Repeating this argument, we see that each cop placed by the algorithm is placed optimally with respect to guarding the chosen unguarded leaf, and hence the algorithm produces an optimal cop placement.  This completes the proof that $\burn(T) \ge N$.

To show that $\burn(G) \leq N$, we argue that the cops can always capture a robber starting from the initial cop placement produced by the algorithm.  Since every leaf is guarded, if the robber starts on a neighbor of a leaf, then the cops can capture him on their first turn.  If instead he starts on a leaf, then some cop can move to the neighbor of that leaf, thereby trapping him, and the cops can capture him on their next turn.  

Suppose the robber starts on any other vertex in $T$.  Each turn, every cop moves one step closer to the robber (if possible).  Initially, for every leaf in $T$, the unique path between the robber and that leaf contains at least one cop.  We claim that after every robber turn, the path between the robber and any leaf in the same component of $T$ contains a cop.  This property is maintained by the cops' strategy, so we need only consider what happens on the robber's turns.  Suppose that it is the robber's turn and that the property holds.  If the robber remains on his current vertex, then the property still holds.  Otherwise, the robber's move either takes him toward or away from any given leaf; in the former case the path from the robber to that leaf still contains a cop, and in the latter case the robber and leaf are now in different components.  Since there must always be a leaf of $T$ in the robber's component,  there must always be a cop in the robber's component, and hence the cops eventually win.
\end{proof}
\end{section}

% grids and hypercubes
\begin{section}{Grids and Hypercubes}\label{sec:grids}

In this section, we investigate the bridge-burning game played on two-dimensional square grids and tori.  As with trees, these are graphs on which the bridge-burning model is much more difficult to analyze than the standard model.  It is known that every two-dimensional grid has cop number at most 2 (see \cite{MM87}) and every two-dimensional torus has cop number at most 3 (see \cite{LP17}), but a
s we will show, there exist grids and tori having arbitrarily large bridge-burning cop number.

The {\em $m \times n$ square grid}, which we denote $G_{m,n}$, is the Cartesian product of the paths $P_n$ and $P_m$; the {\em $m \times n$ square torus}, denoted $T_{m,n}$, is the Cartesian product of $C_n$ and $C_m$.  We view the vertex sets of both graphs as the set of ordered pairs $(i,j)$ with $0 \le i \le n-1$ and $0 \le j \le m-1$.  For fixed $i$, we say that vertices of the form $(i,k)$ are in {\em column $i$} of the grid; similarly, those of the form $(\ell,i)$ are in {\em row $i$}.  Note that both $G_{m,n}$ and $T_{m,n}$ have $m$ rows and $n$ columns; a vertex's column is indexed by its first coordinate, while its row is indexed by the second coordinate.

We say that vertex $(i,j)$ is {\em to the left} of $(i',j')$ if $i < i'$ and {\em to the right} if $i > i'$.  Similarly, $(i,j)$ is {\em above} $(i',j')$ if $j < j'$ and {\em below} if $j > j'$.  When a player moves from vertex $(i,j)$ to $(i+1,j)$, we say they {\em move right}; likewise, when they move to $(i-1,j)$, $(i,j+1)$, or $(i,j-1)$ we say that they {\em move right}, {\em move down}, or {\em move up}, respectively.

We begin with $2 \times n$ grids.  In this setting, a robber who starts near the left or right ends of the grid has somewhat more power than one who starts in the middle: the ends of the grid contain vertices of low degree, which makes it easier for the robber to isolate himself.  The cops can prevent this by stationing cops ``close enough'' to the ends of the grid; the following lemma formalizes this idea.

\begin{lemma}\label{lem:2byn_robber_outside} 
Consider the game played on $G_{2,n}$.  If a cop starts in column $j$ where $1 \leq j \leq 3$ and the robber starts to the left of the cop, then the cop can capture the robber.  Similarly, if a cop starts in column $k$ where $n-4 \leq k \leq n-2$ and the robber starts to the right of the cop, then the cop can capture the robber.
\end{lemma}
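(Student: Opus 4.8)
The plan is to reduce to a single configuration by symmetry and then give an adaptive pursuit strategy for one cop. First, the reflection of $G_{2,n}$ sending column $i$ to column $n-1-i$ converts the second statement into the first, so it suffices to treat a cop starting in some column $a$ with $1\le a\le 3$, the robber starting somewhere in columns $0,\dots,a-1$; and the reflection swapping the two rows lets me assume the cop starts in the top row. Note that the robber begins inside the subgrid induced by columns $0,\dots,a-1$, which is a copy of $G_{2,a}$ and so has at most $6$ vertices and $7$ edges. The whole idea is that one cop can keep the robber trapped in a small neighbourhood of the left end and grind that region down until he has no safe move.

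I would build the strategy around two features of a two-row grid. First, any two vertices in a common column are adjacent, so the instant the cop enters the robber's column she captures him; equivalently, whenever the cop is genuinely adjacent to the robber he must move on his turn, and therefore burn an edge. Second, the only way the robber can reach a column $\ge a$ is to pass through one of the (at most two) vertices of column $a$; the cop will sit in column $a$ and occupy precisely the vertex the robber would otherwise slip through, so that as long as the robber has not yet entered column $a$, stepping toward it is fatal for him (he would be pinned against the cop across the still-intact column-$a$ edge). Taken together, these let the cop guarantee the robber stays confined to columns $0,\dots,a-1$ for the whole game.

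The cop's play is then roughly as follows: while the robber is near column $a$, hug column $a$, shuttling between its two vertices as needed, so that the robber can neither break out to the right nor safely loiter; once he is forced to retreat inward, chase him along a shortest path, staying adjacent. Because the robber's region has so few vertices and edges, and because he is forced to burn an edge on essentially every turn he is in contact with the cop, his reachable set collapses within a bounded number of rounds to a short path or a burnt-down small cycle; a finite check of those residual positions — using that the cop enters the robber's shrunken region from the column-$a$ side and so herds him toward column $0$ — shows she corners and captures him.

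The delicate point, and where I expect most of the work to lie, is the interplay near column $a$ when $a=3$: there the robber commands a full $2\times 3$ block and can feint toward the right end while threatening to dive for the far corner and wall himself off behind burnt edges. One has to show the cop can always tell which threat is live and respond so as to neutralise both -- in particular, that she can always reach the key vertex of column $a-1$ in time to stop the robber from sealing off a corner region with himself inside. Since the number of reachable configurations is small this is in principle a finite verification, but it is the heart of the lemma; the cases $a\le 2$ are much easier (for $a=1$ the cop simply captures on her first turn).
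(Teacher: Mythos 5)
There is a genuine gap: your write-up is a plan for a proof rather than a proof. You correctly reduce by symmetry to a cop in column $a$ with $1\le a\le 3$ (top row), and you correctly identify that the content of the lemma is a finite case analysis over the robber's few starting positions and escape attempts when $a=3$ --- but you then explicitly defer that analysis (``a finite check of those residual positions\dots shows she corners and captures him''; ``this is in principle a finite verification, but it is the heart of the lemma''). The paper's proof \emph{is} that verification: the cop moves to $(2,0)$ on her first turn and then six explicit cases (one per robber starting vertex) give concrete move sequences showing capture. Without those cases, nothing has been established, and the corner configurations are exactly where a naive pursuit fails --- e.g.\ a robber at $(0,0)$ or $(0,1)$ threatens to traverse the $4$-cycle on columns $0,1$ and isolate himself, and one must check move-by-move that a cop three columns away arrives in time.

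Two of the structural claims underpinning your plan are also imprecise in ways that matter. First, ``the instant the cop enters the robber's column she captures him'' holds only if the vertical edge in that column is still intact; the robber may have burnt it, which is precisely what he is trying to arrange. Second, your strategy asks the cop both to ``sit in column $a$'' to block rightward escape and to ``chase him along a shortest path'' inside columns $0,\dots,a-1$; she cannot do both, and you do not resolve the tension. The paper resolves it differently: its cop does not confine the robber to the left at all, but instead allows him to flee rightward while she stays in his column with the vertical edge intact (Case 4), so that he must run horizontally forever and eventually exhausts the grid. This suggests your confinement-based framing, even if completed, would need a different mechanism for the case where the robber breaks right after burning edges near the left end.
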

\begin{proof} 
First, we suppose the cop starts on vertex $(3,0)$ and the robber starts to the left of the cop; a symmetric argument suffices for cops starting on vertices $(3,1)$, $(n-4,0)$, or $(n-4,1)$ with the robber to the left, right, or right, respectively.  On her first turn, the cop always moves left to $(2,0)$.  Henceforth, the cop plays as explained in the cases below.

\begin{itemize}
\item {\bf Case 1:} the robber starts on $(2,0)$.  The cop captures him immediately on her first turn.

\medskip
\item {\bf Case 2:} the robber starts on $(1,0)$.  If the robber remains on $(1,0)$ on his first turn, then the cop captures him on her next turn.  If the robber moves left to $(0,0)$, the cop moves left to $(1,0)$.  Regardless of the robber's next move, the cop then moves down to $(1,1)$ and the robber is trapped, so the cop wins. If instead the robber moves down to $(1,1)$ on his first turn, then the cop moves down to $(2,1)$.  The robber must move left to $(0,1)$ to avoid capture.  The cop moves back up to $(2,0)$ and traps the robber, ensuring her win.

\medskip
\item {\bf Case 3:} the robber starts on $(0,0)$.  Regardless of the robber's first move, the cop moves left to $(1,0)$ on her second turn.  If the robber moves right to $(1,0)$ on his first turn, then the cop captures him on her subsequent turn.  If the robber instead chooses to stay on $(0,0)$ on his first turn, then on his second turn, he must move down to $(0,1)$ to avoid capture.  The cop can now move down to $(1,1)$ and trap the robber.  If the robber moves down to $(0,1)$ on his first turn, then he must remain on $(0,1)$ or move right to $(1,1)$ on his second turn; in either case, the cop moves down to $(1,1)$ and either traps or captures the robber.

\medskip
\item {\bf Case 4:} the robber starts on $(2,1)$.  Since the cop moves to $(2,0)$ on her first turn, the robber now must move either left or right to avoid capture on the cop's ensuing turn.  For the remainder of the game, on each turn, the cop moves horizontally into the same column as the robber.  Consequently, on the ensuing robber turn, the robber must continue moving horizontally in the same direction to avoid capture.  Since the graph is finite, the robber cannot keep this up forever, so eventually the cop wins.

\medskip
\item {\bf Case 5:} the robber starts on $(1,1)$.  If the robber moves right to $(2,1)$, then the cop captures him on her next turn.  If the robber remains on $(1,1)$, then the cop moves left to $(1,0)$.  The robber must now move either horizontally on his next turn to avoid capture; the cop can now capture him using the strategy described in Case 4.  If the robber moves left to $(0,1)$, then the cop moves left to $(1,0)$ and traps the robber.  In any case, the cop wins.

\medskip
\item {\bf Case 6:} the robber starts on $(0,1)$.  Regardless of the robber's first move, the cop moves left to $(1,0)$ on her second turn.  If the robber moves right to $(1,1)$ on his first turn, then on his next turn, he must move left or right to avoid capture; once again, the cop can now capture him using the strategy given in Case 4.  If the robber moves up to $(0,0)$, then he must remain on $(0,0)$ on his next turn, after which the cop moves left to capture him.  If the robber remains on $(0,1)$ after his first move, then after the cop's second move, the robber must again remain on $(0,1)$ to avoid capture.  The cop now moves down to $(1,1)$, forcing the robber to move up to $(0,0)$; the cop moves up to $(0,1)$ and traps the robber.
\end{itemize}

\medskip
This establishes the claim for the case where the cop starts in columns 3 or $n-4$; similar arguments suffice if the cop begins in columns 1 or 2 (or $n-2$ or $n-3$).
\end{proof}

We next consider how to deal with a robber who begins in the middle of the grid, far from the edges.  The cops must be sure not to leave too large of a ``gap'' between cops, lest they give the robber enough freedom to isolate himself.

\begin{lemma}\label{lem:2byn_middle} 
Consider the game on $G_{2,n}$, and suppose two cops start in the same row at a distance of $k$ columns apart where $k \leq 9$.  
If the robber starts between them, then the cops can capture him.
\end{lemma}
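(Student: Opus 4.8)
The plan is to give two cops an explicit pursuit strategy that corners the robber against one of the two vertical edges bounding the ``channel'' between them, using the fact that in a $2\times n$ grid the robber has very little room to maneuver — at any moment he occupies one of the two rows, and the only way he can escape a horizontal sweep is to switch rows, which costs him a move and can only be done once per column. Set up coordinates so that the two cops, call them $c_L$ and $c_R$, start on $(a,0)$ and $(a+k,0)$ with $k\le 9$ and the robber starts somewhere in columns $a+1,\dots,a+k-1$. The high-level idea is a \emph{squeeze}: each cop repeatedly moves toward the robber, and we track the ``safe region'' for the robber, i.e.\ the set of vertices he can occupy without being captured on the cops' next turn. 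I would argue that this region is (essentially) a subgrid of width strictly decreasing over time, so the robber is eventually captured; the bound $k\le 9$ is exactly what guarantees the two cops can get into a coordinated ``wall'' configuration fast enough that the robber cannot punch through either side before the walls close.

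First I would handle the setup: on the cops' first turn (or first few turns) I would have each cop move to establish a ``guarding'' posture near its end of the channel — concretely, $c_L$ works to occupy a column $j_L$ from which it can cover both vertices of some column (by oscillating between rows as in Case 4 of Lemma~\ref{lem:2byn_robber_outside}), and similarly for $c_R$. The key invariant to maintain is: \emph{after each cop turn, the robber is strictly between the two cops' columns, and each cop is positioned so the robber cannot cross its column without being caught.} Then I would show that on each subsequent round the cops can decrease the width of the channel by at least one: whichever cop is in the robber's row (or can get there) advances a column, forcing the robber to either retreat (good — channel shrinks) or switch rows (then the \emph{other} cop, or the same cop on its next move, advances, and the robber, now pinned in one row, must move horizontally again). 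A short case analysis on the robber's row and position relative to each cop — at most a handful of cases, parallel to the six cases in Lemma~\ref{lem:2byn_robber_outside} — closes this out. Because the channel has width at most $9$ and shrinks each round, the robber is captured in a bounded number of rounds.

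The delicate point, and where the constant $9$ actually gets used, is the \emph{transient phase}: before the cops are in their coordinated guarding postures, the robber gets a few free moves, and during those moves he might try to race to one end of the channel and isolate himself on a low-degree corner vertex (exactly the threat that Lemma~\ref{lem:2byn_robber_outside} is designed to neutralize). I would invoke Lemma~\ref{lem:2byn_robber_outside} here: if the robber ever gets close enough to an end that a single cop there could have caught him, then that cop (having started within $k/2\le 4$ or so columns of that portion of the channel after its opening moves, hence within columns $1$--$3$ or $n-4$--$n-2$ of the relevant sub-board) simply applies that lemma and wins alone, while the other cop is irrelevant. So the real content is a counting argument: with $k\le 9$, after $c_L$ and $c_R$ spend their opening moves heading inward, each is close enough to its end that the robber cannot reach that end ``in time'' without running into the Lemma~\ref{lem:2byn_robber_outside} trap, and meanwhile the squeeze described above has begun. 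I expect the main obstacle to be bookkeeping the race precisely — verifying that $9$ (and not, say, $10$ or $11$) is the threshold at which the robber can always be intercepted — which amounts to checking a small number of worst-case opening sequences for the robber (dash left, dash right, stall, zig-zag) against the cops' responses; this is the part I would expect to require the most careful, if routine, case-checking.
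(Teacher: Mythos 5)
There is a genuine gap here: your plan never engages with the actual threat the constant $9$ is guarding against, and it defers the entire quantitative content of the lemma to ``routine case-checking'' that is in fact the whole proof. In the middle of a $2\times n$ grid the robber does not need to ``punch through'' a wall or race to a corner of the board; he can isolate himself \emph{in place} by burning a short cycle. Concretely, with a cop at $(j,0)$ and the next cop ten or more columns to the right, a robber starting at $(j+2,0)$ moves right, down, right, up, left, and after five moves sits on $(j+3,0)$ with all three of its incident edges burned --- no squeeze argument that merely shrinks the channel width by one column per round rules this out, because the robber wins in a constant number of moves regardless of how the channel is shrinking. The paper's proof is organized entirely around this threat: it first shows the robber must make at least two vertical moves to win, then classifies his escape attempt by the number of horizontal moves before the first vertical move ($k_1$) and between the two vertical moves ($k_2$), and in each case shows by a column-counting argument that the cops' initial separation must be at least $2k_1+2k_2+4$ or $2k_1+2k_2+6$, hence at least $10$. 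That accounting is where $9$ versus $10$ is decided, and your proposal contains none of it.

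Your use of Lemma~\ref{lem:2byn_robber_outside} is also off target. That lemma concerns the genuine ends of the grid, where degree-$2$ corner vertices exist; the ``end of the channel'' between two mid-grid cops has no such vertices, so the lemma does not apply to a sub-board as you suggest. The paper does invoke Lemma~\ref{lem:2byn_robber_outside} once, but only in the single boundary sub-case $k_1=k_2=1$, where the robber has burned the four-cycle around his starting vertex and thereby cut one cop out of his component --- at that point his component really is a $2\times(\text{something})$ grid with him near its corner and the surviving cop within three columns, which is exactly the hypothesis of that lemma. To repair your argument you would need to (i) identify the mid-grid isolation maneuver as the obstruction, (ii) prove the robber needs at least two vertical moves, and (iii) carry out the per-case distance accounting against both cops simultaneously; as written, the proposal asserts the conclusion of the lemma (``$9$ is the threshold'') rather than proving it.
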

\begin{proof} 
By giving a cop strategy, we show that regardless of the robber's strategy, he cannot win unless the separation between cops is at least 10 columns.  The general strategy for the cops will be to move horizontally toward the robber; we give a full specification below.  First, we claim that the robber must make at least two vertical moves in order to win.

Suppose the robber has a winning strategy using fewer than two vertical moves.  Note that if the robber ever moves to a vertex directly above or below a cop with the corresponding vertical edge intact, then that cop captures him immediately.  If the robber only moves horizontally, then he is eventually captured by one of the cops he starts between.  If the robber makes only one vertical move, then he enters a row with all of its edges intact, and as before, the cops can now trap the robber from both sides.  Thus, the robber cannot win without making at least two vertical moves.  

We now detail the cops' strategy.  Let $c_\ell$ (respectively, $c_r$) denote the cop that starts to the left (respectively, the right) of the robber.  Without loss of generality, assume both cops start in row 0.  In most circumstances, $c_\ell$ and $c_r$ both move horizontally inward towards the robber on every turn, and move vertically only when the robber is directly below them.  Exceptions to this are as follows:

\begin{itemize}
\item If moving horizontally would cause a cop to enter a vertex with no vertical edge, then the cop instead moves vertically to the other row and continues moving horizontally towards the robber for as long as possible.

\medskip
\item If the robber starts in row 0 and, on his first turn, moves horizontally away from one of the cops, then that cop subsequently moves down to row 1 and henceforth continues moving horizontally toward.

\medskip
\item If the robber starts in row 0 and his first four moves are to cycle back to his starting vertex (meaning that he has moved up, down, left, and right in some order), then only one cop remains in the robber's component.  From this point onward, this cop plays as in Lemma \ref{lem:2byn_robber_outside}, supposing that the she and the robber have each taken a single turn, and the robber's first move was vertical.  (We will argue below that by the time the robber returns to his starting vertex, the remaining cop is no more than three columns away, so she may indeed employ the strategy in Lemma~\ref{lem:2byn_robber_outside}.)

\medskip
\item If the robber starts in row 1 and moves up on his first turn, then both cops move down in response and, henceforth, move horizontally inward towards the robber on each turn.
\end{itemize}

\medskip
We claim that when the cops employ this strategy, the robber can avoid capture only if the cops start at least 10 columns apart.  We consider several cases.  We may clearly assume that the robber moves to an adjacent vertex on his first turn, since remaining in place only allows the cops to move closer to each other (and to the robber).

\medskip
{\bf Case 1:} the robber starts in row 0, initially moves horizontally $k_1$ times ($k_1 \geq 0)$, and then moves down.  Without loss of generality, assume that the robber initially moves right. 

\begin{itemize}
\item[(a)] Suppose $k_1 = 0$ (so the robber moves down on his first turn).  If the robber remains on this vertex for the remainder of the game, then he clearly loses, so suppose without loss of generality that he eventually moves right.  The robber must eventually return to row 0 in order to win, and he can do so no earlier than his third turn.  During this time, the cops have moved horizontally inward on each turn.  Thus, before returning to row 0, the robber must be at least 5 columns from each cop's starting position to avoid capture by that cop; the claim now follows.

\medskip
\item[(b)] Next suppose $k_1 \ge 1$, and suppose that after moving down, the robber moves right $k_2$ times before moving up (where $k_2 \ge 1$).  In total, the robber has returned to row 0 after at least $k_1+k_2+2$ turns and in doing so has moved $k_1+k_2$ columns to the right of his starting position.  To avoid capture by $c_r$, who has moved $k_1+k_2+3$ columns to the left during this time, the robber currently must be at least $k_1+k_2+4$ columns away from $c_r$'s starting position; thus he must have started the game at least $2k_1+2k_2+4$ columns to the left of $c_r$.  Additionally, to avoid capture by $c_{\ell}$, he must have started at least two columns to her right.  Consequently, for the robber to avoid capture, the cops must have started at least $2k_1+2k_2+6$ columns apart, which is at least 10, as claimed.

\medskip
\item[(c)] Finally suppose that $k_1 \ge 1$ and that after moving down, the robber moves left $k_2$ times before moving up (where $k_2 \ge 1$).  Since the robber initially moved right, $c_{\ell}$ has moved right once, down once, and right another $k_1+k_2$ times during her first $k_1 + k_2 + 2$ turns.  Thus, just before moving up, the robber must be at least $k_1+k_2+2$ columns to the right of $c_{\ell}$'s starting position to avoid capture by $c_{\ell}$; since the robber is now $k_2-k_1$ columns to the left of his starting position, he must have started at least $2k_1+2$ columns to the right of $c_{\ell}$.  Additionally, to avoid capture by $c_r$ (who moves left on each of his first $k_1+1$ turns), the robber must have started at least $2k_1+2$ columns to the left of $c_r$.   In total, for the robber to avoid capture, the cops must start at least $2k_1+2k_2+4$ columns apart.  

The claim now follows unless $k_1 = k_2 = 1$.  In this case, the robber moves right, down, left, and up in his first four moves, thereby returning to his starting vertex.  During this time, $c_\ell$ has responded by moving right once, down once, and right three times, moving a total of 4 columns closer to the robber.  Thus, if $c_\ell$ starts no more than 6 columns to the left of the robber, then $c_{\ell}$ will be able to capture the him using the strategy outlined in Lemma \ref{lem:2byn_robber_outside}.  Since the robber must also start at least 4 columns to the left of $c_r$ (as mentioned above), the result follows.
\end{itemize}

\medskip
{\bf Case 2:} the robber starts in row 1, moves horizontally $k_1$ times ($k_1 \geq 0$), and then moves up. Without loss of generality, assume that the robber initially moves right. % Note that in his first $k_1+1$ turns the robber has moved right $k_1$ times and $c_r$ has moved left $k_1+2$ times, so the robber must start at least $k_1+3$ columns to the left of $c_r$ to avoid capture.

\begin{itemize}
\item[(a)] Suppose $k_1 = 0$ (so the robber moves up on his first turn).  The robber must eventually return to row 1 in order to win and can do so no sooner than his third turn.  During this time, each cop moves down to row 1 and at least three columns closer to the robber.  At this point, all horizontal edges in row 1 are intact.  Thus, if either cop is now within two columns of the robber, then the cops will trap him regardless of his next move.  The claim now follows.

\medskip
\item[(b)] Next suppose that $k_1 \ge 1$ and that after moving up, the robber next moves right $k_2$ times (where $k_2 \geq 1$) before moving down.  Note that just before the robber moves down, he has moved $k_1+k_2$ columns to the right, while $c_r$ has moved $k_1+k_2+2$ columns to the left.  Thus the robber must have started at least $2k_1+2k_2+3$ columns to the left of $c_r$ to avoid capture by $c_r$.  Additionally, the robber must have started at least three columns to the right of $c_{\ell}$, since otherwise $c_{\ell}$ will capture him immediately after he moves up.  In total, for the robber to avoid capture, the cops must start at least $2k_1+2k_2+6$ columns apart; this is at least 10, as claimed.

\medskip
\item[(c)] Finally, suppose that $k_1 \ge 1$ and that after moving up, the robber next moves left $k_2$ times (where $k_2 \geq 1$) before moving down.  During his first $k_1+1$ turns, the robber has moved $k_1$ columns to the right; during her first $k_1+2$ turns, $c_r$ has moved $k_1+2$ columns to the left.  Thus for the robber to avoid capture by $c_r$, he must start at least $2k_1+3$ columns to her left.  Additionally, just before the robber moves down, he has moved $k_2-k_1$ columns to the left, while $c_{\ell}$ has moved $k_1+k_2+2$ columns to the right.  Thus the robber must start at least $2k_2+3$ columns to the right of $c_{\ell}$.  Once again, for the robber to avoid capture, the cops must start at least $2k_1+2k_2+6$ columns apart, and the claim follows.
\end{itemize}
\end{proof}

We are finally ready to determine $\burn(G_{2,n})$.

\begin{theorem}\label{thm:2byn} 
$\displaystyle \burn(G_{2,n}) = \left \lceil \frac{n+2}{9} \right\rceil$.
\end{theorem}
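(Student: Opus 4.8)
The plan is to prove the two inequalities $\burn(G_{2,n}) \le \ceil{\frac{n+2}{9}}$ and $\burn(G_{2,n}) \ge \ceil{\frac{n+2}{9}}$ separately: the upper bound is a packing argument built on Lemmas~\ref{lem:2byn_robber_outside} and~\ref{lem:2byn_middle}, while the lower bound combines a pigeonhole count with an explicit robber strategy.

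For the upper bound, set $N = \ceil{\frac{n+2}{9}}$ and suppose first that $n \ge 8$. Place all $N$ cops in row $0$, in columns $q_1 < q_2 < \dots < q_N$ chosen so that $q_1 \le 3$, $q_N \ge n-4$, and $q_{i+1}-q_i \le 9$ for every $i$; such a choice exists because the span $q_N - q_1$ need be only $n-7$, and $n-7 \le 9(N-1)$ by the definition of $N$. Any vertex at which the robber could safely start lies to the left of $q_1$ (handled by Lemma~\ref{lem:2byn_robber_outside}), to the right of $q_N$ (handled by the symmetric half of Lemma~\ref{lem:2byn_robber_outside}), or strictly between two consecutive cops $q_i,q_{i+1}$ that are at most $9$ columns apart (handled by Lemma~\ref{lem:2byn_middle}); a robber starting in a cop's column but not on the cop is captured on the cops' first turn. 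For $n \le 7$ we have $N = 1$, and a single cop suffices: if $4 \le n \le 7$ there is a column $j$ with $1 \le j \le 3$ and $n-4 \le j \le n-2$, so one cop placed there defends both ends via Lemma~\ref{lem:2byn_robber_outside}, while $n \le 3$ is immediate ($G_{2,2} = C_4$, and $G_{2,3}$ is tiny).

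For the lower bound, suppose fewer than $N$ cops play, and let $j_1 \le j_2 \le \dots \le j_k$ be their columns, with $k \le N-1$. If we had $j_1 \le 3$, $j_k \ge n-4$, and $j_{i+1}-j_i \le 9$ for all $i$, then $n-4 \le j_k \le j_1 + 9(k-1) \le 9k-6$, which forces $k \ge \frac{n+2}{9}$ and hence $k \ge N$, a contradiction. So at least one of the following holds: (i) $j_1 \ge 4$; (ii) $j_k \le n-5$; (iii) $j_{i+1}-j_i \ge 10$ for some $i$. In case (i) columns $0,1,2,3$ are cop-free; the robber starts at $(0,0)$ or $(0,1)$ and, using these free columns, burns the two edges joining column $0$ to column $1$ while remaining in column $0$, isolating himself on the $K_2$ formed by $(0,0)$ and $(0,1)$ before the cops can interfere. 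The direction in which he first dips (via row $0$ or row $1$) is dictated by the row of the nearest cop, exactly as in the analysis of Lemma~\ref{lem:2byn_robber_outside}; case (ii) is symmetric. In case (iii) the robber starts at $(c,0)$ for a column $c$ with $c-j_i \ge 5$ and $j_{i+1}-c \ge 5$ (possible since the gap is at least $10$), then spends three moves burning both edges joining column $c-1$ to column $c$, severing the grid there and stranding the cop to his left; he is now at the left end of a smaller $2 \times m$ grid whose leftmost vertices have degree $2$, with the surviving cop (having lost three columns of ground) at distance at least two, and a short case analysis — in some lines he completes the isolation, in others he forces a stalemate as in Figure~\ref{fig:stalemate} — finishes the argument.

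I expect case (iii) of the lower bound to be the main obstacle: one must verify that the robber always has a safe move during the three-move walling-off and in the endgame that follows, which requires tracking the exact column-distances to, and the rows of, both converging cops, and confirming that a gap of exactly $10$ leaves precisely enough room. The end cases (i) and (ii) are milder but still need the row-dependent split indicated above, whereas the upper bound reduces, once Lemmas~\ref{lem:2byn_robber_outside} and~\ref{lem:2byn_middle} are available, to the elementary spacing computation carried out above.
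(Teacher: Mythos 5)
Your upper bound is essentially the paper's own argument (same spacing of cops at columns $3, 12, 21, \dots, n-4$, same appeals to Lemmas~\ref{lem:2byn_robber_outside} and~\ref{lem:2byn_middle}), and your pigeonhole trichotomy for the lower bound follows the same skeleton the paper uses. But the lower bound has a genuine gap: you never consider two cops occupying the same column in different rows, and this defeats your case (i). With cops at $(4,0)$ and $(4,1)$, the corner $4$-cycle fails no matter which row the robber starts in: starting at $(0,r)$, after his second move he sits on $(1,1-r)$, which the cop who started at $(4,1-r)$ reaches on her third move, i.e.\ exactly in time to capture him there. So ``columns $0$ through $3$ are cop-free'' does not by itself hand the robber a win, and one must separately argue that stacking cops is never economical. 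The paper does exactly this, deriving the weaker constraints (leftmost column $\le 4$, inter-cop gap $\le 10$) that apply when columns are doubled and then checking that the optimum is attained without doubling. The same omission infects your case (iii), since a doubled column $j_i$ also pushes the robber's mid-grid starting point one column further right.

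Separately, your case (iii) robber strategy is the wrong one even for singleton columns. After the three-move wall-off at column $c$ with $j_{i+1}-c \ge 5$, the robber is parked on the degree-$2$ corner of a fresh $2\times(n-c)$ grid while the surviving cop, having had three moves to close from five columns away, can be within two or three columns of him --- and that is precisely a configuration in which Lemma~\ref{lem:2byn_robber_outside} says the \emph{cop} wins; no Figure~\ref{fig:stalemate}-style stalemate is available on a $2\times m$ grid. The paper avoids any endgame: against a single cop at $(j,0)$ and a gap of at least $10$, the robber starts at $(j+2,0)$ and in five moves (right, down, right, up, left) isolates himself on the single vertex $(j+3,0)$, having disconnected the left cop after his third move and stayed out of reach of the right one by a distance count. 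You would need to replace your wall-off-and-fight plan with a self-isolating walk of this kind, and then handle the doubled-column variants, before the lower bound closes. (Two smaller points: ``burns the two edges joining column $0$ to column $1$ while remaining in column $0$'' is self-contradictory, since he must traverse those edges; and choosing the dip direction by ``the row of the nearest cop'' is exactly the step that has no good answer when both rows of column $4$ are occupied.)
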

\begin{proof} 
We first show that $\burn(G) \le \ceil{\frac{n+2}{9}}$ by explaining how this many cops can capture the robber.  If $n \leq 7$, one cop suffices: the cop begins in column 3 (or in column $n-1$ if $n \le 3$) and, by Lemma \ref{lem:2byn_robber_outside}, has a winning strategy.  Otherwise, place one cop in column 3 and one cop in column $n-4$.  Next, starting from the cop in column 3, repeatedly place a cop 9 columns to the right of the previous cop until there are at most 9 columns between the cop just placed and the cop in column $n-4$.  In total, we have placed $\left \lceil \frac{n-7}{9} \right\rceil$ cops in the first $n-4$ columns in addition to the cop in column $n-4$, for a total of $\left \lceil \frac{n-7}{9} \right\rceil+1$ cops, which simplifies to $\ceil{\frac{n+2}{9}}$.  By Lemmas~ \ref{lem:2byn_robber_outside} and \ref{lem:2byn_middle}, this cop placement ensures that the cops can capture the robber, hence $\burn(G) \leq \left \lceil \frac{n+2}{9} \right\rceil$.

For the reverse inequality, suppose $k$ cops suffice to win the game.  Label the cops $c_1,...,c_k$ and suppose that cop $c_i$ begins in column $m_i$, with $m_i \le m_j$ whenever $i < j$.  

We first claim that if $m_1 \not = m_2$, then $m_1 \leq 3$.  Suppose that $4 \le m_1 < m_2$ and, without loss of generality, that $c_1$ starts on vertex $(m_1,0)$.  If the robber starts on $(0,0)$, then he can isolate himself on $(0,0)$ by moving right to $(1,0)$, down to $(1,1)$, left to $(0,1)$, and finally up to $(0,0)$; it is straightforward to verify that $c_1$ cannot reach the robber quickly enough to capture him.  (Refer to Figure~\ref{fig:G2n_1}.)  This proves the claim, and by symmetry it follows that if $m_{k-1} \not = m_k$, then $m_k \ge n-4$.

If instead $m_1 = m_2$, then the same robber strategy given in the previous paragraph shows that $m_1 \ge 4$; symmetrically, if $m_{k-1} = m_k$, then $m_k \le n-5$.

Next, we claim that for all $i \in \{1,...,k\}$, if either column $m_i$ or column $m_{i+1}$ contains only one cop, then $m_{i+1} \leq m_i+9$.  Suppose $m_i = j$ and $m_{i+1} \geq j+10$ for some $i$, and suppose by symmetry that column $m_{i}$ contains only one cop.  Without loss of generality, assume $c_i$ starts on vertex $(j,0)$.  The robber can now isolate himself on $(j+3,0)$ by starting on $(j+2,0)$ and moving right to $(j+3,0)$, down to $(j+3,1)$, right to $(j+4,1)$, up to $(j+4,0)$, and left to $(j+3,0)$.  Note that $c_i$ cannot reach the robber within three moves, and after the robber's third move, $c_i$ no longer occupies the same component as the robber.  On the other hand, $c_{i+1}$ starts too far away from the robber to reach him before he has isolated himself.  (Refer to Figure~\ref{fig:G2n_2}.) 

A similar argument shows that if both columns $m_i$ and $m_{i+1}$ contain two cops, then $m_{i+1} \le m_i+10$ (and hence $m_{i+2} \le m_i+10$).

To minimize $k$ subject to the constraints established above, we may clearly take $m_1 = 3$ and $m_{i+1} = m_i+9$ for $1 \le i \le k-1$ (where $m_k$ is reduced to $n-1$ if needed).  This yields $m_k = \min\{n-1,3+9(k-1)\}$, which suffices so long as $m_k \ge n-4$, i.e. $3+9(k-1) \ge n-4$, or $9k \ge n+2$.  Thus we obtain $k \ge \frac{n+2}{9}$; since $k$ is an integer, in fact $k \ge \ceil{\frac{n+2}{9}}$, as claimed.
\end{proof}

\begin{figure}[h]
\begin{center}
\begin{tikzpicture}
[inner sep=0mm, thick,
 vertex/.style={draw=black, circle, fill=black, minimum size=0.1cm},
 cvertex/.style={draw=black, fill=blue, circle, minimum size =0.2cm},
 rvertex/.style={draw=black, fill=red, circle, minimum size =0.2cm},
 dot/.style={draw=black,fill=black, circle, minimum size=0.04cm},
 vertexlabel/.style={draw=none, fill=none, shape=rectangle, inner sep=2pt, font=\small},
 textnode/.style={draw=none, fill=none, shape=rectangle, inner sep=2pt},
 xscale=1.0,yscale=1.0]
 
% generate grid
 
\foreach \x in {0,...,5}
   \foreach \y in {0,...,1} 
		\node at (\x,\y) [vertex] {};
\foreach \x in {0,...,5}
    \draw (\x,0) -- (\x,1);
\foreach \y in {0,...,1}
    \draw (0,\y) -- (5.5,\y);		
    
% cop vertex
\node (c1) at (4,1) [cvertex] {};
\node at ($(c1)+(0,0.2)$) [anchor=south] {$c_1$};

% robber vertex
\node (r) at (0,1) [rvertex] {};
\node at ($(r)+(-0.2,0.2)$) [anchor=east] {$r$};

% robber's path
\node (r1) at (1,1) {};
\node (r2) at (1,0) {};
\node (r3) at (0,0) {};

% draw and label arrows
\draw[->,red,ultra thick] (r) -- (r1) node[midway,vertexlabel, anchor=south] {$1$};
\draw[->,red,ultra thick] (r1) -- (r2) node[midway,vertexlabel, anchor=west] {$2$};
\draw[->,red,ultra thick] (r2) -- (r3) node[midway,vertexlabel, anchor=north] {$3$};
\draw[->,red,ultra thick] (r3) -- (r) node[midway,vertexlabel, anchor=east] {$4$};

\end{tikzpicture}
\end{center}
\caption{: Robber strategy at corner for Theorem~\ref{thm:2byn}.}
\label{fig:G2n_1}
\end{figure}
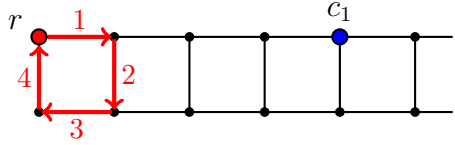

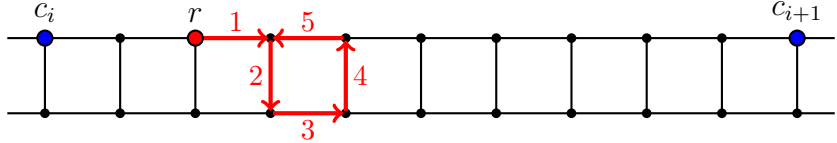
\begin{figure}[h]
\begin{center}
\begin{tikzpicture}
[inner sep=0mm, thick,
 vertex/.style={draw=black, circle, fill=black, minimum size=0.1cm},
 cvertex/.style={draw=black, fill=blue, circle, minimum size =0.2cm},
 rvertex/.style={draw=black, fill=red, circle, minimum size =0.2cm},
 dot/.style={draw=black,fill=black, circle, minimum size=0.04cm},
 vertexlabel/.style={draw=none, fill=none, shape=rectangle, inner sep=2pt, font=\small},
 textnode/.style={draw=none, fill=none, shape=rectangle, inner sep=2pt},
 xscale=1.0,yscale=1.0]
 
 % generate grid
 \foreach \x in {0,...,10}
    \foreach \y in {0,...,1} 
		    \node at (\x,\y) [vertex] {};
\foreach \x in {0,...,10}
    \draw (\x,0) -- (\x,1);
\foreach \y in {0,...,1}
    \draw (-0.5,\y) -- (10.5,\y);		
    
% cops' vertices
\node (c1) at (0,1) [cvertex] {};
\node at ($(c1)+(0,0.2)$) [anchor=south] {$c_i$};
\node (c2) at (10,1) [cvertex] {};
\node at ($(c2)+(0,0.2)$) [anchor=south] {$c_{i+1}$};

% robber vertex
\node (r) at (2,1) [rvertex] {};
\node at ($(r)+(0,0.2)$) [anchor=south] {$r$};

% robber's path
\node (r1) at (3,1) {};
\node (r2) at (3,0) {};
\node (r3) at (4,0) {};
\node (r4) at (4,1) {};

% draw and label arrows
\draw[->,red,ultra thick] (r) -- (r1) node[midway,vertexlabel, anchor=south] {$1$};
\draw[->,red,ultra thick] (r1) -- (r2) node[midway,vertexlabel, anchor=east] {$2$};
\draw[->,red,ultra thick] (r2) -- (r3) node[midway,vertexlabel, anchor=north] {$3$};
\draw[->,red,ultra thick] (r3) -- (r4) node[midway,vertexlabel, anchor=west] {$4$};
\draw[->,red,ultra thick] (r4) -- (r1) node[midway,vertexlabel, anchor=south] {$5$};

\end{tikzpicture}
\end{center}
\caption{: Robber strategy between cops for Theorem~\ref{thm:2byn}.}
\label{fig:G2n_2}
\end{figure}

Before proceeding, we remark that Lemma \ref{lem:2byn_middle} and an argument along the lines of that used for Theorem \ref{thm:2byn} together yield $\burn(P_2 \cart C_n) = \ceil{\frac{n}{9}}$ for $n \ge 10$; we omit the details. 

We next tackle general $m \times n$ grids.  As a first step toward this goal, we will actually consider $m \times n$ tori, since the analysis is simpler and uses many of the same techniques we will use for grids.  We begin by building up sufficient conditions for the cops to win on $T_{m,n}$.  Our first lemma actually applies to any graph in which all vertices have even degree, so it may be useful for graphs other than $T_{m,n}$.

\begin{lemma}\label{lem:torus_starting_vx} 
Let $G$ be a graph in which every vertex has even degree.  If at any point any cop can reach the robber's starting vertex, then the cops can capture the robber.
\end{lemma}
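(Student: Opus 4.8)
The heart of the argument is a parity invariant. At any point in the game, the set $B$ of edges that have been burned is precisely the edge set of the walk the robber has taken from his starting vertex $v_0$ to his current vertex $r$; since he can never reuse a burned edge, this walk is a trail, and in any trail every vertex has even degree except its two endpoints $v_0$ and $r$, which have odd degree (and all degrees are even if $v_0 = r$). Because every vertex of $G$ has even degree, the odd-degree vertices of the current graph $G - B$ are therefore exactly $\{v_0, r\}$, or none at all. Since every component of a graph contains an even number of odd-degree vertices, $v_0$ and $r$ lie in the same component of $G - B$. Thus, until he is captured, the robber always occupies the same component of the current graph as his starting vertex $v_0$.

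The plan is then as follows. Fix a cop $c$ who at some moment occupies a vertex lying in the component of $v_0$; from then on, $c$ ignores the other cops and each turn moves one step along a shortest path, in the current graph, toward $v_0$. I would first argue that $c$ is never permanently cut off from $v_0$, and hence reaches $v_0$ after finitely many turns. The only way $c$ can be separated from $v_0$ is for the robber to burn a cut-edge $xy$ of the current graph, say by moving from $x$ to $y$. By the parity invariant applied to the graph immediately after this move, its odd-degree vertices are $v_0$ and $y$, so $v_0$ lies on the $y$-side of the cut; hence $c$ can become separated only if she is on the $x$-side. But then every shortest path from $c$ to $v_0$ uses the edge $xy$, so it passes through $x$ — the robber's current vertex — and thus $c$ is bearing straight down on the robber; if $c$ is ever adjacent to $x$ at the start of her turn she captures him. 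Moreover, since every vertex of $G$ has even degree, every edge-cut of $G$ is even, so before $xy$ can be a bridge the robber must first burn away every other edge of the cut $(C_x,C_y)$, and he can only do so by traversing those edges, which repeatedly sends him toward the cut — that is, toward $c$, who is guarding it. Turning this into a quantitative claim that the robber cannot both burn the cut down to $xy$ and keep $c$ from reaching $x$ in time (for instance by tracking $\dist(c,v_0)$ together with the number of as-yet-unburned cut-edges, and using that the robber's component loses an edge on every robber move) is, I expect, the main obstacle of the proof.

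Once $c$ has reached $v_0$, she switches to chasing the robber along shortest paths in the current graph; by the parity invariant $v_0$ always lies in the robber's component, and the same considerations as above keep $c$ in it. It then remains only to see that $c$ wins. Whenever the robber moves, he deletes an edge of his own component, and his component in the resulting graph is contained in the vertex set of his previous component; hence the number of edges in the robber's component strictly decreases with every robber move. Since $c$ lies in the robber's (finite, connected) component, she can keep approaching until she is adjacent to the robber, forcing him to move; therefore the robber can move only finitely many times, after which his component has no edges, he cannot move at all, and $c$ captures him. Hence the cops can capture the robber.
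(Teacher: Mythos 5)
Your parity argument is exactly the one the paper uses, and it is the heart of the proof: the burned edges form a trail from the robber's starting vertex $v$ to his current vertex, so these are the only vertices of odd degree in the remaining graph and must lie in a common component. The genuine gap is the one you name yourself and do not resolve: nothing about a ``move along a shortest path toward $v$, then toward the robber'' strategy prevents the robber from severing the \emph{cop} from $v$, and the parity invariant protects only the robber's connection to $v$, never the cop's. Your sketch for closing it (every edge cut of an even graph is even, so the robber must burn the whole cut and is thereby ``sent toward'' the guarding cop) does not work as stated --- the edges of a cut can be pairwise far apart, so burning them down to a single bridge in no way forces the robber near the cop. Note also that the lemma's hypothesis already grants that the cop can \emph{reach} $v$, so your entire first phase is unnecessary; but the same severing problem recurs in your second phase, where you appeal to ``the same considerations as above,'' i.e.\ to the argument you acknowledge is missing.

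The paper closes this gap not with a harder connectivity lemma but with a different cop strategy. Once at $v$, the cop maintains the invariant that she is never farther from $v$ than the robber is: she moves toward the robber when he does not approach $v$, and toward $v$ when he does. This does two jobs at once. First, the robber can never safely return to $v$, so the remaining graph always has exactly the two odd-degree vertices $u$ (the robber) and $v$. Second, the cop can never be cut off from $v$: if the robber burns a bridge $xy$ by moving from $x$ to $y$, parity places $v$ on the $y$-side, and a cop at $w$ on the $x$-side would have $\dist(w,v)=\dist(w,x)+1+\dist(y,v)\le \dist(x,v)=1+\dist(y,v)$ by the invariant, forcing $\dist(w,x)=0$, i.e.\ the cop is already on the robber. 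So it is the distance invariant, not shortest-path pursuit, that keeps the cop in the robber's component; with it in place, your finiteness argument (the robber's component loses an edge with each of his moves, and the cop forces him to keep moving) correctly finishes the proof.
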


\begin{proof} 
Consider the game played on $G$, and suppose some cop $c$ reaches the robber's starting vertex $v$.  For the remainder of the game, the cop plays as follows:

\begin{itemize}
\item If the robber does not move closer to $v$, then the cop moves closer to the robber.
\item If the robber moves closer to $v$, then the cop moves closer to $v$.
\end{itemize}

Note that the cop ensures that she is never further from $v$ than the robber, so the robber can never safely return to $v$.  We now show that this strategy does in fact enable the cop to capture the robber.

The cop's strategy ensures that the robber cannot remain on the same vertex indefinitely, so long as there is some path joining the cop and the robber.  Thus, the only way for the robber to avoid capture is to disconnect $G$ and wind up in a component that contains no cops.  We claim that the cop guarding $v$ is always in the same component as the robber, hence the robber cannot perpetually escape capture.

The cop's strategy ensures that there is always a path from her current position to $v$, so it suffices to show that the robber is always in the component containing $v$.  Each vertex in $G$ initially has even degree, and when the robber passes through a vertex, he deletes two edges incident to that vertex.  Thus, at all points in the game, every vertex has even degree except perhaps for $v$ and the robber's current position, $u$.  Since the cop's strategy prevents the robber from returning to $v$, we must have $u \neq v$.  Thus, the graph has exactly two vertices of odd degree, namely $u$ and $v$.  Since every component of the graph must contain an even number of vertices with odd degree, $u$ and $v$ must belong to the same component, as claimed.
\end{proof}

We next apply Lemma~\ref{lem:torus_starting_vx} to establish a simpler sufficient condition for the cops to win on $T_{m,n}$.

\begin{lemma}\label{lem:torus_helper}
For the torus $T_{m,n}$ with $m \ge n$, if some cop starts the game within distance 5 of the robber, then that cop can capture the robber.
\end{lemma}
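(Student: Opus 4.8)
The plan is to reduce to Lemma~\ref{lem:torus_starting_vx}: since every vertex of $T_{m,n}$ has degree $4$ (even), it suffices to show that a cop who starts within distance $5$ of the robber can force her way onto the robber's \emph{starting} vertex $v$ at some point in the game. The cop will simply chase the robber by always decreasing the distance to $v$ (rather than to the robber directly), and we must argue that this works before the robber can do any real damage to the torus. The key observation is that in the first few rounds the robber's deletions are confined to a small neighborhood of $v$: after $t$ robber moves, the deleted edges all lie within distance $t$ of $v$, and the robber himself is within distance $t$ of $v$. So I would track the quantity $\dist(c,v)$ in the (only slightly modified) torus. As long as this distance is at most, say, $5$, the relevant ball around $v$ is still ``grid-like'' enough that the cop loses at most one unit of distance per robber move due to a burnt edge forcing a detour — and typically loses nothing, since she is heading toward $v$ along paths the robber is moving \emph{away} from.

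Concretely, I would have the cop move toward $v$ along a shortest path in the current (partially burnt) torus on every turn; I claim $\dist(c,v)$ strictly decreases each round until the cop reaches $v$. Initially $\dist(c,v) \le 5$ (if the cop starts within distance $5$ of the robber, she starts within distance $5$ of $v$ since the robber has not yet moved). After the robber's move, the distance from the cop to $v$ can only \emph{increase} if the robber's latest burnt edge lies on \emph{every} current shortest $c$--$v$ path; but in a torus, for a vertex $v$ and a vertex $c$ at distance at most $5$, there are at least two internally disjoint shortest paths unless $c$ and $v$ lie in a common row or column, and even then the robber, who is moving to a new vertex each turn and thus generically moving away from $v$, cannot be simultaneously burning edges on all of them. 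I would do a short case analysis on the small ball $B_5(v)$ in $T_{m,n}$ (using $m \ge n$ only to know the ball is genuinely toroidal and not wrapped, when $n$ is small this needs a tiny separate check) to verify that the cop's distance-to-$v$ drops by at least $1$ each round, so she reaches $v$ within $5$ rounds; then Lemma~\ref{lem:torus_starting_vx} finishes the job.

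The main obstacle I anticipate is the bookkeeping for small $n$: when $n$ is tiny (say $n = 3$ or $4$), the ball of radius $5$ wraps around the torus in the short direction, the ``two disjoint shortest paths'' claim needs care, and one must check directly that a cop starting within distance $5$ can still step onto $v$ before the robber isolates a component. This is a finite check, but it is the part most likely to hide an edge case. A secondary subtlety is making the chase strategy precise when there are ties in $\dist(\cdot,v)$ among the cop's available moves — the cop should break ties toward a vertex that still has two shortest paths to $v$ available — but this is a minor refinement of the strategy rather than a genuine difficulty. Once the cop occupies $v$, no further work is needed beyond citing Lemma~\ref{lem:torus_starting_vx}.
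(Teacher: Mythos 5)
Your reduction is the right one and matches the paper's: every vertex of $T_{m,n}$ has even degree, so by Lemma~\ref{lem:torus_starting_vx} it suffices for the cop to reach the robber's starting vertex $v$ (or capture him outright along the way). The gap is in how you get there. Your central claim --- that the cop can move so that $\dist(c,v)$ in the current graph strictly decreases every round --- is false, and the failure is not confined to exotic small-$n$ wraparound cases. Take the cop in the same row as $v$ at distance $d\ge 2$, so the unique shortest $c$--$v$ path is the straight row segment. On his first move the robber leaves $v$ \emph{toward} the cop, burning the edge of $v$ that lies on that unique path; the cop's distance to $v$ then jumps by $2$ (she must detour through the adjacent row). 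Your heuristic that the robber is ``generically moving away from $v$'' is exactly backwards here: the robber's whole plan is to loop tightly around $v$ and burn the edges incident to it, and he can do so in as few as $8$ moves, so the slack is thin. Your tie-breaking refinement (prefer vertices with two disjoint shortest paths to $v$) does not help when cop and $v$ share a row or column, which is an unavoidable starting configuration.

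What actually rescues the argument --- and what the paper's proof is built around --- is a dichotomy rather than a monotone potential: the cop aims not at $v$ but at a \emph{neighbor} of $v$, and one checks that whenever the robber burns an edge the cop was counting on, the robber must at that moment be on or adjacent to the cop's intended position, so she captures him directly instead. In the example above, the robber who burns the row edge at $v$ has stepped into the cop's lane and is caught. The paper carries this out as an explicit case analysis over the (finitely many) starting positions $(i,j)$ with $0\le j\le i$ and $i+j\le 5$, in each case giving a short move sequence after which the cop ``can either capture the robber or reach his starting vertex.'' Your proposal defers precisely this to ``a short case analysis on $B_5(v)$,'' but that case analysis is the entire content of the lemma, and organizing it around a strict-decrease claim would have you trying to prove something that is not true. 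So: correct skeleton, but the load-bearing step is missing and the stated mechanism for it fails.
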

\begin{proof}
Suppose cop $c$ starts the game within distance 5 of the robber.  By symmetry we may assume that $c$ begins at vertex $(0,0)$, while the robber begins at some vertex $(i,j)$ with $0 \le j \le i$ and $i+j \le 5$.  We consider five cases (see Figure \ref{fig:torus_helper}).  In the cases below, it will be helpful to note that if $c$ can reach some neighbor of the robber's starting vertex (with the edge between the two vertices still intact), then she can either capture the robber or reach his starting vertex.

\medskip
\begin{itemize}
\item {\bf Case 1:} $(i,j) \in \{(0,0), (1,0), (2,0), (3,0), (1,1), (2,1)\}$.  If $(i,j) = (0,0)$, then $c$ captures the robber immediately.  Otherwise, she moves to the right on her first move; from here it is straightforward to verify that she can either capture the robber or reach his starting vertex, regardless of the robber's strategy.

\medskip
\item {\bf Case 2:} $(i,j) = (2,2)$.  This time, $c$ moves right on her first turn and down on her second.  On her next turn she can move either right to $(2,1)$ or down to $(1,2)$; at least one of these two vertices must still be adjacent to $(2,2)$, so she can either capture the robber or reach his starting vertex.

\medskip
\item {\bf Case 3:} $(i,j) \in \{(4,0), (1,3)\}$.  In this case, $c$ moves right on her first two turns, and again she can either capture the robber or reach his starting vertex.  (This is easy to verify by inspection unless the robber began on $(4,0)$ and moved left, then down on his first two turns; in this case $c$ can reach the robber's starting vertex by moving right, up, right, and down on her next four turns.)

\medskip
\item {\bf Case 4:} $(i,j) = (3,2)$.  This time, $c$ moves right on her first two turns and down on her third.  Once again she can either capture the robber or reach his starting vertex by way of either $(3,1)$ or $(2,2)$, since the robber has not taken enough turns to delete the edges joining each of these vertices to his starting vertex.

\medskip
\item {\bf Case 5:} $(i,j) \in \{(5,0), (4,1)\}$.  This time, $c$ moves right on her first three turns and again she can capture the robber or reach his starting vertex, similarly to Case 3.
\end{itemize}
In any case, $c$ can capture the robber, as claimed.
\end{proof}

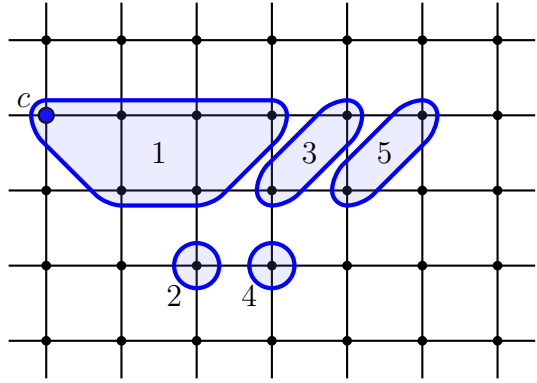
\begin{figure}[h]
\begin{center}
\begin{tikzpicture}
[inner sep=0mm, thick,
 vertex/.style={draw=black, circle, fill=black, minimum size=0.1cm},
 cvertex/.style={draw=black, fill=blue, circle, minimum size =0.2cm},
 dot/.style={draw=black,fill=black, circle, minimum size=0.04cm},
 vertexlabel/.style={draw=none, fill=none, shape=rectangle, inner sep=2pt, font=\small},
 textnode/.style={draw=none, fill=none, shape=rectangle, inner sep=2pt},
 xscale=1.0,yscale=1.0]

\foreach \x in {0,...,6}
    \foreach \y in {4,...,8} 
		\node at (\x,\y) [vertex] {};	
\foreach \x in {0,...,6}
    \draw (\x,3.5) -- (\x,8.5);
\foreach \y in {4,...,8}
    \draw (-0.5,\y) -- (6.5,\y);		
   
% cop starting vertex
\node (c1) at (0,7) [cvertex] {};
\node at ($(c1)+(-0.2,0.2)$) [anchor=east] {$c$};

% the five cases
\draw [blue, ultra thick, fill=blue!50, fill opacity=0.15, rounded corners=2mm] (-0.2,7.2) -- (-0.2,6.8) -- (0.8, 5.8) -- (2.2,5.8) -- (3.2,6.8) -- (3.2,7.2) -- cycle;
\draw [blue, ultra thick, fill=blue!50, fill opacity=0.15] (2,5) circle (0.3cm);
\draw [blue, ultra thick, fill=blue!50, fill opacity=0.15, rounded corners=2mm] (3.2,5.8) -- (2.8,5.8) -- (2.8,6.2) -- (3.8,7.2) -- (4.2,7.2) -- (4.2,6.8) -- cycle;
\draw [blue, ultra thick, fill=blue!50, fill opacity=0.15] (3,5) circle (0.3cm);
\draw [blue, ultra thick, fill=blue!50, fill opacity=0.15, rounded corners=2mm] (4.2,5.8) -- (3.8,5.8) -- (3.8,6.2) -- (4.8,7.2) -- (5.2,7.2) -- (5.2,6.8) -- cycle;

\node at (1.5,6.5) [anchor=center] {$1$};
\node at (1.7,4.6) [anchor=center] {$2$};
\node at (3.5,6.5) [anchor=center] {$3$};
\node at (2.7,4.6) [anchor=center] {$4$};
\node at (4.5,6.5) [anchor=center] {$5$};

\end{tikzpicture}
\end{center}
\caption{: Cases for Lemma~\ref{fig:torus_helper}.}
\label{fig:torus_helper}
\end{figure}

Our next sufficient condition for the cops to win is somewhat technical, but quite powerful.

\begin{lemma}\label{lem:torus_cops_corners}
In the game on $T_{m,n}$, suppose the robber begins on some vertex $(i,j)$. If some cop $c_1$ begins the game within distance 7 of $(i-1,j-1)$ and some cop $c_2$ begins the game within distance 7 of $(i+1,j+1)$, or if $c_1$ begins within distance 7 of $(i+1,j-1)$ and $c_2$ begins within distance 7 of $(i-1,j+1)$, then the cops can capture the robber.
\end{lemma}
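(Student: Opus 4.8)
The plan is to reduce to the previous two lemmas by showing that, under the stated hypotheses, the two cops can act in concert to herd the robber so that eventually either one of them gets within distance $5$ of the robber (invoking Lemma~\ref{lem:torus_helper}) or one of them reaches the robber's current-turn starting position in the sense of Lemma~\ref{lem:torus_starting_vx}. By symmetry (reflecting the torus) it suffices to treat the case where $c_1$ starts within distance $7$ of $(i-1,j-1)$ and $c_2$ within distance $7$ of $(i+1,j+1)$. First I would set up coordinates so the robber starts at the origin $(0,0)$, so $c_1$ is within distance $7$ of $(-1,-1)$ and $c_2$ is within distance $7$ of $(1,1)$; thus $c_1$ can reach $(-1,-1)$ within $7$ cop-turns and $c_2$ can reach $(1,1)$ within $7$ cop-turns. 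Each cop's plan is: spend those $\le 7$ turns marching to her assigned anti-diagonal neighbor of the origin, then thereafter ``shadow'' the robber along her diagonal.

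The key observation is a parity/diagonal invariant. On the torus every vertex has even degree, so by (the argument behind) Lemma~\ref{lem:torus_starting_vx} the robber can never disconnect himself away from a cop who is ``tracking'' his starting vertex; more usefully here, consider the quantity $d(u) = $ (first coordinate) $+$ (second coordinate) of the robber's position $u$, read modulo the appropriate period. Each robber move changes $d$ by $\pm 1$. The two cops sit (after their setup phase) on the two diagonal corners $(-1,-1)$ and $(1,1)$, which differ from the origin by $d = -2$ and $d = +2$ respectively. The idea is that $c_1$ maintains the invariant ``$c_1$ is on a vertex $w$ with $d(w) = d(\text{robber}) - 2$ and $c_1$ is at least as close to the robber along the off-diagonal direction as possible,'' and symmetrically for $c_2$ with $d(\text{robber})+2$. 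As long as the robber keeps moving, one of the two cops is continually gaining ground, because the robber cannot simultaneously increase his $d$-distance from both $c_1$ and $c_2$. So after boundedly many robber moves the robber is pinned between the two cops on a common diagonal, at which point he is within distance $5$ of one of them and Lemma~\ref{lem:torus_helper} finishes the job; meanwhile, if the robber instead sits still or tries to cycle back to the origin, Lemma~\ref{lem:torus_starting_vx} applies since $c_1$ reached the neighbor $(-1,-1)$ of the origin (hence can reach the origin itself) before the robber has had time to delete the incident edges — this is where the distance-$7$ slack is used, matching the $7$-turn setup budget.

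In the write-up I would organize this as: (i) the reduction by symmetry to one corner configuration; (ii) the explicit ``setup then shadow'' strategy for each cop, with a careful count showing $7$ turns suffice to reach the diagonal neighbor while the robber, needing at least $3$ moves to return toward the origin and more to get far, cannot both escape the shadowing cone of $c_1$ on the left and that of $c_2$ on the right; (iii) the invariant argument that the robber's $d$-coordinate cannot stay equidistant from both cops, so the ``gap'' between the cops' diagonals (measured through the robber) strictly shrinks on every robber move; (iv) termination, concluding with an appeal to Lemma~\ref{lem:torus_helper} or Lemma~\ref{lem:torus_starting_vx}.

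The main obstacle I anticipate is case (ii)/(iii): making precise exactly what ``shadowing along a diagonal'' means on a torus (where diagonals wrap and the two cops' diagonals eventually meet on both sides), and verifying that the robber genuinely cannot exploit the toroidal wraparound to slip past a cop or play the two cops against each other. I expect this needs a small but nontrivial case analysis on the robber's move relative to each cop — analogous in spirit to, but messier than, the casework in Lemma~\ref{lem:torus_helper} — together with a clean potential function (something like the minimum over the two cops of the $L^\infty$- or diagonal-distance to the robber, weighted so it provably decreases) to guarantee the process halts. The distance-$7$ bound in the hypothesis is presumably tuned precisely so that the setup phase finishes before the robber's edge-deletions can spoil either the Lemma~\ref{lem:torus_starting_vx} fallback or the shadowing cones, so pinning down that arithmetic is the crux.
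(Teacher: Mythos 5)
Your proposal correctly identifies the two ingredients the paper uses (Lemma~\ref{lem:torus_helper} and Lemma~\ref{lem:torus_starting_vx}) and the symmetry reduction, but the main line of your argument --- the ``setup then shadow'' strategy with a diagonal invariant, ending in an appeal to Lemma~\ref{lem:torus_helper} once a cop is pinned within distance~5 --- has a genuine gap. Lemma~\ref{lem:torus_helper} is a statement about the \emph{start} of the game: its proof is a case analysis that depends on all edges being intact and on the robber having taken at most a few turns, so it cannot be invoked mid-game after the robber has deleted many edges. Likewise, your shadowing phase never confronts the fact that the robber's edge deletions can obstruct the cops' own movement along their diagonals, and ``pinned between two cops'' does not by itself yield a capture. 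What you treat as a fallback for the case where the robber ``sits still or cycles back'' is in fact the entire argument: by Lemma~\ref{lem:torus_starting_vx} it suffices to show that \emph{some} cop can reach the robber's original starting vertex $(i,j)$, full stop --- no shadowing, no potential function, and no requirement that the robber return there.

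The paper's proof does exactly this. After disposing of the cases where a cop is already within distance~5 of $(i,j)$ (so Lemma~\ref{lem:torus_helper} applies from the initial position, which is legitimate), it sends $c_1$ to $(i-1,j-1)$ via row $j-1$ and $c_2$ to $(i+1,j+1)$ via column $i+1$, each then stepping to $(i,j)$ along one of the two intact length-2 paths. The only nontrivial point is that the robber cannot block both approaches: to cut $c_1$'s route he must visit a vertex $(i',j-1)$ with $i' \le i-2$, which costs him at least three moves and leaves him several steps up and to the left --- too far to also interfere with $c_2$, who reaches $(i+1,j+1)$ within roughly six robber turns with both of her length-2 paths to $(i,j)$ still intact. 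That ``the robber cannot thwart both cops'' trade-off is the real content of the lemma and is where the distance-7 budget is spent; your proposal gestures at a related tension via the $d$-coordinate but does not supply an argument that closes it.
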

\begin{proof}
Suppose cops $c_1$ and $c_2$ begin within distance 7 of vertices $(i-1,j-1)$ and $(i+1,j+1)$, respectively; the other case is symmetric.  If $c_1$ either begins on $(i-1,j-1)$ itself or begins both strictly to the right of and strictly below $(i-1,j-1)$, then she is within distance 5 of $(i,j)$ and can capture the robber by Lemma~\ref{lem:torus_helper}.  Thus we may suppose that $c_1$ begins either strictly to the left of or strictly above $(i-1,j-1)$; by symmetry we may assume the former.  Likewise, we may assume that $c_2$ begins strictly to the right of $(i+1,j+1)$; a similar argument suffices if she begins strictly below.  

By Lemma~\ref{lem:torus_starting_vx}, it suffices to show that at least one cop can reach $(i,j)$.  To this end, cop $c_1$ first attempts to travel either up or down to row $j-1$, then right to $(i-1,j-1)$.  Once she has reached $(i-1,j-1)$, she attempts to reach $(i,j)$ via either $(i-1,j)$ or $(i,j-1)$, provided that one of these paths remains intact.  Similarly, $c_2$ attempts to travel left or right to column $i+1$, then up to $(i+1,j+1)$, and from there to $(i,j)$ via either $(i+1,j)$ or $(i,j+1)$.  

We claim that the robber cannot prevent both $c_1$ and $c_2$ from reaching $(i,j)$.  There are two ways for the robber to thwart $c_1$: he could prevent $c_1$ from reaching $(i-1,j-1)$, or he could allow $c_1$ to reach $(i-1,j-1)$ but prevent her from then reaching $(i,j)$.  Preventing $c_1$ from reaching $(i-1,j-1)$ would require visiting some vertex $(i',j-1)$ with $i' \le i-2$.  This would take the robber at least three turns and leave him at least three steps left and up from $v$.  Since the robber takes at most six turns before $c_2$ reaches $(i+1,j+1)$, he cannot prevent $c_2$ from reaching $(i+1,j+1)$ with both length-2 paths to $(i,j)$ intact.  Consequently, $c_2$ can either reach $(i,j)$ or, if the robber attempts to traverse one of these paths, capture him directly.  Thus the robber cannot safely prevent $c_1$ from reaching $(i-1,j-1)$, nor (by symmetry) can he prevent $c_2$ from reaching $(i+1,j+1)$.  

By the time $c_1$ and $c_2$ reach $(i-1,j-1)$ and $(i+1,j+1)$ respectively, the robber has taken at most six turns, hence at least one of the two cops can reach $(i,j)$ in two steps; at this point the robber cannot prevent that cop from reaching $(i,j)$, since burning an edge along the relevant path would leave the robber on or adjacent to the cop, resulting in his capture.
\end{proof}

We will also need conditions that guarantee a robber win.  We begin with a useful lemma that applies not only to tori and grids, but to all graphs.

\begin{lemma}\label{lem:robber_distance}
Fix a graph $G$ and positive integers $k$ and $d$, and let $v$ be a vertex of $G$.  Let $d_i$ denote the robber's distance from $v$ after his $i$th move in the original graph $G$ (that is, disregarding any edge deletions that may occur during the game).  If the robber can play so that $i + d_i < d$ for $0 \le i \le k-1$, and if no cop begins within distance $d$ of $v$, then the cops cannot capture the robber before his $k$th move.
\end{lemma}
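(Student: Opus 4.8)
The plan is to run a distance-bookkeeping argument entirely in the fixed graph $G$, using only that deleting edges never decreases distances; all distances below are taken in $G$. First I would extract the two useful consequences of the hypotheses: reading ``within distance $d$'' inclusively, every cop begins at a vertex at distance at least $d+1$ from $v$; and, after fixing a robber play that witnesses the assumption, if $r_i$ denotes his position after his $i$th move then $\dist(r_i,v)=d_i$ and $i+d_i<d$ --- equivalently $d_i\le d-i-1$, since graph distances are integers --- for $0\le i\le k-1$.

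Since a capture can occur only on a cop's turn, it suffices to check the cop turn of round $j$ for each $1\le j\le k$ and show no cop is then on the robber. At that instant the robber occupies $r_{j-1}$. Each cop move is along an edge of the current graph, which is a subgraph of $G$, so a cop who has taken $j$ moves has decreased her $G$-distance to $v$ by at most $j$ and hence sits at distance at least $(d+1)-j$ from $v$; meanwhile $\dist(r_{j-1},v)=d_{j-1}\le d-j$. The triangle inequality in $G$ then gives $\dist(\mathrm{cop},r_{j-1})\ge\big((d+1)-j\big)-(d-j)=1$, so the cop misses the robber. Running this over all cops and all rounds $j$ with $1\le j\le k$, together with the trivial remark that the robber's starting vertex --- at distance $d_0\le d-1<d+1$ from $v$ --- is not occupied by any cop, shows the robber is never captured up through the cop turn of round $k$; that is, he survives to make his $k$th move.

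The only delicate point is the arithmetic: the ``$+1$'' head start the cops must concede because ``within distance $d$'' is inclusive is precisely what upgrades the triangle-inequality bound from $0$ to the strict value $1$, so the off-by-one counts (robber moves versus cop moves on a given cop turn, and the integrality step $d_i<d-i\Rightarrow d_i\le d-i-1$) must be tracked carefully. I do not anticipate any conceptual obstacle beyond this; in particular nothing here uses any structure of $G$, which is why the statement is made for an arbitrary graph. (One may also note in passing that the hypothesis forces $k\le d$, since $d_i\ge 0$; for $k>d$ the hypothesis is vacuous.)
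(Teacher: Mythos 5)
Your proposal is correct and follows essentially the same route as the paper: both arguments fix a cop, work with distances in the original graph $G$ (using that edge deletions only increase distances), and apply the triangle inequality through $v$ to show the cop is always at distance at least $1$ from the robber's current vertex at the moment capture could occur. The paper phrases the bound as $\dist_G(u,v_i) > i+1$ versus the at most $i+1$ cop steps available, while you track the cop's residual distance from $v$ round by round, but the arithmetic is identical.
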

\begin{proof}
Consider an arbitrary cop $c$; it suffices to show that $c$ cannot capture the robber before his $k$th move.  Let $u$ denote $c$'s starting vertex, and for $i \in \{0, 1, \dots, k-1\}$, let $v_i$ denote the robber's position after his $i$th move.  In order for $c$ to capture the robber on $v_i$, we must have $\dist_G(u,v_i) \le i+1$, since $c$ can take at most $i+1$ steps before the robber leaves $v_i$.  However, 
$$\dist_G(u,v) \le \dist_G(u,v_i) + \dist_G(v_i,v),$$
and so
$$\dist_G(u,v_i) \ge \dist_G(u,v) - \dist_G(v_i,v) \ge (d+1) - d_i > (d+1) - (d-i) = i+1,$$
so $c$ cannot capture the robber on $v_i$.  It follows that $c$ cannot capture the robber before his $k$th move.
\end{proof}

Lemma~\ref{lem:robber_distance} leads to a useful sufficient condition for the robber to win on $G_{m,n}$ or $T_{m,n}$.

\begin{lemma}\label{lem:grid_rinside}
Fix positive integers $m$ and $n$, and consider the game on either $T_{m,n}$ or $G_{m,n}$.  If there is some vertex $v$ of degree 4 such that no cop starts within distance 5 of $v$ and at most one cop starts within distance 9, then the robber can win.
\end{lemma}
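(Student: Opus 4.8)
\emph{Plan of the proof.} The robber will start on $v$ and win by isolating $v$: he burns all four edges incident to $v$ and returns to $v$, after which $v$ is an isolated vertex on which he sits forever. Write the four neighbors of $v$ as $n_1,n_2,n_3,n_4$ in cyclic order, and let $d_{12},d_{23},d_{34},d_{41}$ be the four vertices at distance $2$ from $v$ reached by a single diagonal step; all eight of these exist because $\deg(v)=4$. Each unit square incident to $v$, say $v,n_1,d_{12},n_2$, can be traversed by a four-move closed walk $v\to n_1\to d_{12}\to n_2\to v$ which burns exactly the two edges $vn_1,vn_2$ of $v$ (and the two other edges of the square). The robber performs two such walks around two opposite unit squares; this takes $8$ moves, burns all four edges of $v$, and returns him to $v$. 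The one adaptive decision is the order of burning: by hypothesis at most one cop — call it $c$ — starts within distance $9$ of $v$, and the robber will burn first the two edges of $v$ lying on the unit square whose diagonal vertex points toward $c$, and burn the edge of $v$ opposite $c$ last; if no cop starts within distance $9$, the order is immaterial.

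The far cops are easy. Throughout the walk the robber stays within distance $2$ of $v$, so if $d_i$ is his distance from $v$ after his $i$th move then $i+d_i\le 8$ for every $i$. Every cop other than $c$ starts at distance at least $10$ from $v$, so the argument of Lemma~\ref{lem:robber_distance} (with the constant there equal to $10$) shows that no such cop can capture the robber during the eight moves of the walk; moreover each of them is still at distance at least $2$ from $v$ after the cops' eighth turn, hence none is on $v$ or on the robber's last vertex when he makes his final move, and from then on $v$ is isolated. So only $c$ can interfere.

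There are two ways $c$ could interfere: it could capture the robber at one of the vertices $p_0,\dots,p_7$ of the walk, or it could occupy $v$ just before the robber's final return, blocking him from burning the last edge. The first is handled by a direct distance count: since $c$ starts at distance $d^{*}\ge 6$ from $v$ and (by the choice of burning order) the robber's first four positions stay at distance $\ge d^{*}-2$ from $c$ while his last four positions lie on the side away from $c$, one checks that on the cops' $k$th turn ($1\le k\le 7$) the robber's current vertex is at distance more than $k$ from $c$ in $G$. The second is the crux: after the first excursion the two edges of $v$ facing $c$, together with the other two edges of that square, have all been burned, and this ``wall'' forces any route of $c$ from its starting region to the neighbor of $v$ across the last-burned (opposite) edge to go the long way around the burned square; tracking which edges are burned by which turn shows this detour is long enough that $c$ arrives only after the robber's final move. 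Hence $c$ never occupies $v$ (nor the robber's last vertex) in time, the robber completes the isolation of $v$, and he wins.

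The step I expect to be the main obstacle is making ``the wall forces a long detour'' precise and tight: one must bound, in the partially burned graph and with the timing of the burnings taken into account, the distance from every possible start of $c$ (at distance $6$–$9$ from $v$, in each octant) to the neighbor of $v$ across the last-burned edge, and show it exceeds the number of cop turns available. I expect this to need a short but careful case analysis over the octant of $c$ and the value of $d^{*}$, plus a figure or two; the behaviour near the boundary of a grid and the wrap-around of a very small torus should also be checked, although those degenerate cases essentially do not arise since the hypothesis cannot then be satisfied.
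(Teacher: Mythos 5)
Your overall strategy coincides with the paper's: start on $v$, run two four-move loops around opposite unit squares to burn all four edges at $v$, take the first loop around the square facing the one nearby cop $c$, and dispose of all distance-$\ge 10$ cops via Lemma~\ref{lem:robber_distance}. That much is fine. But the part you yourself flag as ``the main obstacle'' --- showing that $c$ can neither capture the robber on the second loop nor occupy $v$ (equivalently, the last gateway neighbor of $v$) before the final move --- is exactly the content of the lemma in the hard case, and you assert it (``one checks\ldots'', ``tracking which edges are burned\ldots shows this detour is long enough'') rather than prove it. As written this is a genuine gap, not a routine verification.

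Moreover, the specific mechanism you propose for closing it is flawed: you fix the entire burn order at the start of the game from $c$'s \emph{initial} position (``burn the edge of $v$ opposite $c$ last''). For a diagonally placed cop there is no single opposite edge, and worse, a cop starting at distance exactly $6$, say at $(i-3,j-3)$, has five free turns during your first excursion in which she can march around the burned square toward whichever of the two remaining gateway vertices $(i+1,j)$, $(i,j+1)$ you pre-announced as last, arriving within three steps of it by the time the second loop begins; she then either captures the robber at that vertex or sits on it and blocks the return to $v$. The choice of which remaining edge to burn last must therefore be made \emph{adaptively} after the fourth move. The paper does this with a short pigeonhole: after the first loop, for $c$ to be within three steps of both $(i+1,j)$ and $(i,j+1)$ while still at least three steps from $(i+1,j+1)$, she would have to occupy one of only five explicit vertices, and each of these is ruled out because an edge the robber has already burned severs her short route; hence at least one of the two orientations of the second loop is safe, and the robber picks that one. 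Replacing your precommitted order and unproven detour bound with this adaptive pigeonhole is what is needed to complete the argument.
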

\begin{proof}
Suppose no cop starts within distance 5 of $(i,j)$ and at most one cop starts within distance 9.  If in fact no cops start within distance 9, then the robber can win by starting on $(i,j)$, then moving right to $(i+1,j)$, up to $(i+1,j-1)$, left to $(i,j-1)$, down to $(i,j)$, left to $(i-1,j)$, down to $(i-1,j+1)$, right to $(i,j+1)$, and finally up to $(i,j)$.  (Refer to Figure~\ref{fig:grid_rinside}.)  Let $d_k$ denote the distance from the robber to $(i,j)$ after the robber's $k$th turn.  The robber's strategy ensures that for $1 \le k \le 7$, we have $k + d_k < 9$.  Since no cop started the game within distance 9 of $(i,j)$, Lemma~\ref{lem:robber_distance} shows that no cop can reach the robber before his 8th move, at which point he isolates himself.

Suppose now that no cops begin within distance 5 of $(i,j)$ and exactly one cop, $c$, begins within distance 9.  Suppose without loss of generality that $c$ begins at vertex $(k,\ell)$, where $k \le i$ and $\ell \le j$.  The robber now plays as follows.  He begins at $(i,j)$, then moves up to $(i,j-1)$, left to $(i-1,j-1)$, down to $(i-1,j)$, and right to $(i,j)$.  At this point, he pauses to assess the situation.   

Since the initial distance from $c$ to $(i,j)$ was at least 6, but $c$ has taken only five turns, she cannot have yet captured the robber.  Moreover, by the assumptions that $k \le i$ and $\ell \le j$, she cannot reach $(i+1,j+1)$ in fewer than three steps, and she cannot reach either $(i+1,j)$ or $(i,j+1)$ in two steps.  We claim that $c$ cannot be within three steps of both $(i+1,j)$ and $(i,j+1)$ simultaneously.  Suppose otherwise.  For the cop to be within three steps of both $(i+1,j)$ and $(i,j+1)$ as well as at least three steps for $(i+1,j+1)$, she must occupy a vertex that is within distance 3 of both $(i+1,j)$ and $(i,j+1)$ in the original graph (before any edge deletions) and within distance 4 of $(i+1,j+1)$.  There are only five such vertices: $(i-1,j)$, $(i,j+1)$, $(i-2,j)$, $(i-1,j-1)$, and $(i,j-2)$.  If $c$ occupies $(i-1,j)$ or $(i-2,j)$ then she cannot reach $(i+1,j)$ within three steps, since the robber has deleted the edge from $(i-1,j)$ to $(i,j)$; likewise, if she occupies $(i,j-1)$ or $(i,j-2)$, then she cannot reach $(i,j+1)$ within three steps, and if she occupies $(i-1,j-1)$ then she cannot reach either vertex within three steps.

Thus, suppose $c$ cannot reach $(i,j+1)$ within three steps (the other case is similar).  The robber now moves right to $(i+1,j)$, down to $(i+1,j+1)$, left to $(i,j+1)$, and finally up to $(i,j)$.  The cop clearly cannot capture the robber on $(i+1,j)$.  She cannot capture the robber on $(i+1,j+1)$, because she takes only two moves before the robber leaves that vertex.  Likewise, she cannot capture the robber on $(i,j+1)$ because she takes only three steps before the robber returns to $(i,j)$.  Hence the robber safely returns to $(i,j)$ and, having done so, isolates himself on $(i,j)$.

Thus $c$ cannot capture the robber.  Moreover, since no other cop begins within distance 9 of the robber, an argument similar to that used at the beginning of the proof shows that no other cop can capture the robber either.  Thus, the robber wins.
\end{proof}

% \begin{lemma}\label{lem:grid_rinside} 
% Fix $m, n \ge 3$. 
% \begin{enumerate}
% \item [(1)] When playing the game on $T_{m,n}$, if there is some vertex $(i,j)$ such that no cop starts within distance 8 of $(i,j)$, then the robber can win.
% \item [(2)] When playing the game on $G_{m,n}$, if there is some vertex $(i,j)$ of degree 4 such that no cop starts within distance 8 of $(i,j)$ and, moreover, $1 \le i \le n-2$ and $1 \le j \le m-2$, then the robber can win.
% \end{enumerate}
% \end{lemma}
% \begin{proof} 
% For (1), suppose no cop starts within distance 8 of some vertex $(i,j)$ on $T_{m,n}$.  The robber can isolate himself by starting on $(i-1,j)$, then moving up to $(i-2,j)$, right to $(i-2,j+1)$, down to $(i-1,j+1)$, left to $(i-1,j)$, left to $(i-1,j-1)$, down to $(i,j-1)$, right to $(i,j)$, and finally up to $(i-1,j)$.  (Refer to Figure~\ref{fig:grid_rinside}.)  Let $d_k$ denote the distance from the robber to $(i,j)$ after the robber's $k$th turn.  The robber's strategy ensures that for $1 \le k \le 7$, we have $k + d_k < 8$.  Since no cop started the game within distance 8 of $(i,j)$, Lemma~\ref{lem:robber_distance} shows that no cop can reach the robber before his 8th move, at which point he isolates himself.

% The proof of (2) is similar.  The strategy given above works unless $j = m-2$, in which case the robber can employ a symmetric strategy.
% \end{proof}

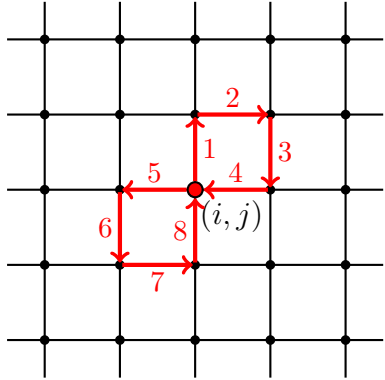
\begin{figure}[h]
\begin{center}
\begin{tikzpicture}
[inner sep=0mm, thick,
 vertex/.style={draw=black, circle, fill=black, minimum size=0.1cm},
 bigvertex/.style={draw=black, fill=black, circle, minimum size=0.2cm},
 cvertex/.style={draw=black, fill=blue, circle, minimum size =0.2cm},
 rvertex/.style={draw=black, fill=red, circle, minimum size =0.2cm},
 dot/.style={draw=black,fill=black, circle, minimum size=0.04cm},
 vertexlabel/.style={draw=none, fill=none, shape=rectangle, inner sep=2pt, font=\small},
 textnode/.style={draw=none, fill=none, shape=rectangle, inner sep=2pt},
 xscale=1.0,yscale=1.0]
 
 % generate grid
 
 \foreach \x in {0,...,4}
    \foreach \y in {0,...,4} 
		    \node at (\x,\y) [vertex] {};

\foreach \x in {0,...,4}
    \draw (\x,-0.5) -- (\x,4.5);

\foreach \y in {0,...,4}
    \draw (-0.5,\y) -- (4.5,\y);

% robber vertex
    
\node (r) at (2,2) [rvertex] {};
%\node at ($(r)+(-0.1,0.2)$) [anchor=east] {$r$};
\node at ($(r)+(0.05,-0.12)$) [anchor=north west]{$(i,j)$};

% robber's path

\node (r1) at (2,3) {};
\node (r2) at (3,3) {};
\node (r3) at (3,2) {};
\node (r4) at (2,1) {};
\node (r5) at (1,1) {};
\node (r6) at (1,2) {};

% draw and label arrows

\draw[->,red,ultra thick] (r) -- (r1) node[midway, vertexlabel, anchor=west] {$1$};
\draw[->,red,ultra thick] (r1) -- (r2) node[midway, vertexlabel, anchor=south] {$2$};
\draw[->,red,ultra thick] (r2) -- (r3) node[midway, vertexlabel, anchor=west] {$3$};
\draw[->,red,ultra thick] (r3) -- (r) node[midway, vertexlabel, anchor=south] {$4$};
\draw[->,red,ultra thick] (r) -- (r6) node[midway, vertexlabel, anchor=south] {$5$};
\draw[->,red,ultra thick] (r6) -- (r5) node[midway, vertexlabel, anchor=east] {$6$};
\draw[->,red,ultra thick] (r5) -- (r4) node[midway, vertexlabel, anchor=north] {$7$};
\draw[->,red,ultra thick] (r4) -- (r) node[midway, vertexlabel, anchor=east] {$8$};

\end{tikzpicture}
\end{center}
\caption{: Robber strategy for Lemma~\ref{lem:grid_rinside}.}
\label{fig:grid_rinside}
\end{figure}

We are finally ready to establish bounds on $\burn(T_{m,n})$.

\begin{theorem}\label{thm:torus} 
For all positive integers $m$ and $n$, 
$$\ceil{\frac{mn}{121}} \leq \burn(T_{m,n}) \leq 2\ceil{\frac{m}{16}} \ceil{\frac{n}{14}}.$$
\end{theorem}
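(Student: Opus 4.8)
The plan is to prove the lower and upper bounds separately. The lower bound will be a counting argument driven by Lemma~\ref{lem:grid_rinside}, while the upper bound will come from an explicit doubly-periodic cop placement that invokes Lemma~\ref{lem:torus_cops_corners} (with Lemma~\ref{lem:torus_helper} as a fallback). Throughout I assume $m,n\ge 3$, so that every vertex of $T_{m,n}$ has degree $4$; the remaining cases are degenerate and can be handled directly.

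For the lower bound, suppose $k$ cops have a winning strategy and fix their initial placement. By Lemma~\ref{lem:grid_rinside}, every vertex $v$ must either have a cop within distance $5$ or have at least two cops within distance $9$ — otherwise the robber could start on $v$ and win. Let $S$ be the set of vertices with no cop within distance $5$; then each vertex of $S$ has at least two cops within distance $9$. I would estimate the two parts of $V(T_{m,n})=S\cup(V\setminus S)$ using the fact that in $T_{m,n}$ at most $4d$ vertices lie at distance exactly $d$ from any fixed vertex. First, $V\setminus S=\bigcup_c B_5(c)$, so $|V\setminus S|\le\sum_c|B_5(c)|\le 61k$. Second, double counting incidences between $S$ and the distance-$9$ balls gives $2|S|\le\sum_c|B_9(c)\cap S|$; since $S$ avoids every distance-$5$ ball, $B_9(c)\cap S$ consists only of vertices at distance $6,7,8,$ or $9$ from $c$, so $|B_9(c)\cap S|\le 4(6+7+8+9)=120$ and hence $|S|\le 60k$. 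Adding these yields $mn\le 121k$, so $k\ge\ceil{mn/121}$.

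For the upper bound, I would partition the torus into $\ceil{m/16}\ceil{n/14}$ blocks, splitting the $m$ rows into $\ceil{m/16}$ consecutive groups of at most $16$ rows and the $n$ columns into $\ceil{n/14}$ consecutive groups of at most $14$ columns, and place two cops in each block using the same placement (up to translation) in every block, so the cops form a doubly-periodic set with two cops per $16\times 14$ fundamental region. The task is to choose the in-block positions so that for every vertex $v=(i,j)$ one of the following holds: some cop starts within distance $5$ of $v$ (then Lemma~\ref{lem:torus_helper} finishes, after interchanging the two torus directions if needed so $m\ge n$), or there are two distinct cops realizing one of the alternatives of Lemma~\ref{lem:torus_cops_corners} — one within distance $7$ of $(i-1,j-1)$ and another within distance $7$ of $(i+1,j+1)$, or the same with $(i+1,j-1)$ and $(i-1,j+1)$. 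Since the robber must start somewhere, one of these lemmas then yields a cop win, so $\burn(T_{m,n})\le 2\ceil{m/16}\ceil{n/14}$. By periodicity, verifying that no vertex is left uncovered reduces to a finite check over a single block and its neighbors.

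I expect that finite check to be the main obstacle. The cops of a block cannot by themselves cover the whole block (a $16\times 14$ region is too large for two radius-$9$ balls), so vertices near a block's corners must be served by cops of adjacent blocks lying on opposite diagonal sides; choosing the two in-block offsets so that the eight neighboring blocks interlock correctly — and confirming that the pair of cops used for each vertex is genuinely distinct and diagonally opposed — is the delicate part. The lower bound, by contrast, is short once Lemma~\ref{lem:grid_rinside} is available.
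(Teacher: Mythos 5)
Your lower bound is correct and is essentially the paper's argument in a different dress: the paper assigns each vertex weight $k+\ell/2$ (where $k$ is the number of cops within distance $5$ and $\ell$ the number of additional cops within distance $9$), notes that Lemma~\ref{lem:grid_rinside} forces every weight to be at least $1$, and bounds each cop's total contribution by $61+120/2=121$; your partition into $S$ and $V\setminus S$ with the double count $2|S|\le 120k$ and $|V\setminus S|\le 61k$ is the same computation, and your constants ($61$ vertices within distance $5$, $120$ more at distances $6$ through $9$) match the paper's.

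The upper bound, however, has a genuine gap: you have described the shape of the construction (two cops per $16\times 14$ fundamental region, covered via Lemmas~\ref{lem:torus_helper} and~\ref{lem:torus_cops_corners}) but you have not actually produced the placement or carried out the finite check, and you yourself flag that check as ``the main obstacle.'' That check is where essentially all the work of the upper bound lives. The paper's placement is a checkerboard: a cop at $(7k,8\ell)$ for each pair with $k+\ell$ odd, so that a $9\times 8$ block has cops at its lower-left and upper-right corners, and one verifies that every vertex of the block except the two remaining corners satisfies the hypotheses of Lemma~\ref{lem:torus_helper} or of Lemma~\ref{lem:torus_cops_corners} with respect to that diagonal pair. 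The two exceptional corners are exactly the interlocking issue you anticipate, and they require cops from adjacent blocks; moreover, when $14\nmid n$ the wraparound seam shortens the horizontal spacing, which the paper handles with a separate case (a cop at $(i,8\ell)$ for some $i\in\{-7,\dots,6\}$, split according to whether $i\in\{-7,-6\}$, $|i|\le 5$, or $i=6$). None of this is routine enough to wave through: without exhibiting concrete offsets and checking the corner and seam cases, the claim that every vertex is covered is unsupported, so as written you have only proved the lower bound. One further small point: Lemma~\ref{lem:torus_helper} is stated for $m\ge n$, and since your block dimensions ($16$ rows by $14$ columns) are not symmetric in the two directions, ``interchanging the two torus directions'' needs a word of justification rather than being invoked in passing.
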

\begin{proof}
We begin with the lower bound.  Consider an initial cop placement on $T_{m,n}$.  We define a weighting function on $V(T_{m,n})$ as follows: a vertex having $k$ cops within distance 5 and another $\ell$ within distance 9 receives weight $k+\ell/2$.  If any vertex has weight less than 1, then by Lemma~\ref{lem:grid_rinside}, the robber has a winning strategy.  Thus, for the cops to win, every vertex must have weight at least 1, so the sum of the weights of all vertices in $T_{m,n}$ must be at least $mn$.  For any vertex $v$ in $T_{m,n}$, there are at most 61 vertices within distance 5 of $v$ and at most an additional 120 within distance 9; consequently, each cop's contribution to the total weight is at most $61 + 120/2$, or $121$.  Thus the total number of cops must be at least $mn/121$, hence $\burn(T_{m,n}) \ge \ceil{\frac{mn}{121}}$.

For the upper bound, we give an initial cop placement and claim that regardless of where the robber starts, at least one cop can either capture the robber or reach the robber's starting vertex (at which point she can capture the robber using the strategy outlined in Lemma \ref{lem:torus_starting_vx}).  For all $0 \le k < 2\ceil{n/16}$ and $0 \le \ell < 2\ceil{m/16}$ such that $k+\ell$ is odd, we place a cop at $(7k,8\ell)$.  (Throughout the proof, for any vertex $(i,j)$, we take $i$ modulo $n$ and $j$ modulo $m$ as needed.)

Now consider the $9 \times 8$ block of vertices with upper-left corner $(7k,8\ell)$ for some $k, \ell$ such that $0 \le 7k < n$ and $0 \le 8\ell < m$.  Suppose $k+\ell$ is even (the case where $k+\ell$ is odd is symmetric).  Cops occupy the lower-left and upper-right vertices of this block, namely $(7k,8\ell+8)$ and $(7k+7,8\ell)$; denote these cops by $c_1$ and $c_2$, respectively.  (Refer to Figure~\ref{fig:torus_cases}.)

We claim that if the robber begins anywhere within this block, then the cops can capture him.  Indeed, every vertex in this block satisfies the hypotheses of Lemma~\ref{lem:torus_helper} or Lemma~\ref{lem:torus_cops_corners}, except perhaps for the top-left and lower-right corners, i.e. $(7k,8\ell)$ and $(7k+7,8\ell+8)$.  To show that the cops can capture robbers who begin at these vertices, we consider several cases.

\begin{itemize}
\item {\bf Case 1:} the robber starts at $(7k,8\ell)$ with $k \ge 1$.  In this case, there is also a cop $c_3$ at vertex $(7k-7,8\ell)$.  Now cops $c_2$ and $c_3$ can capture the robber by Lemma~\ref{lem:torus_cops_corners}.

\medskip
\item {\bf Case 2:} the robber starts at $(7k,8\ell)$ with $k = 0$, i.e. at $(0,8\ell)$.  Unlike in case 1, this time there need not be a cop at $(-7,8\ell)$.  The cops' strategy ensures that in row $8\ell$, cops appear 14 columns apart.  In particular, there is a cop $c_3$ at vertex $(i,8\ell)$ for some $i \in \{-7, -6, \dots, 6\}$.  If $i \in \{-7,-6\}$, then cops $c_2$ and $c_3$ can capture the robber by Lemma~\ref{lem:torus_cops_corners}.  If instead $i \in \{-5, -4, \dots, 5\}$, then the robber starts within distance 5 of $c_3$, who can thus capture him by Lemma~\ref{lem:torus_helper}.  Finally, if $i = 6$, then $c_3$ occupies vertex $(6,8\ell)$, while some other cop $c_4$ occupies $(-1,8\ell+8)$.  Now $c_3$ and $c_4$ can capture the robber by Lemma~\ref{lem:torus_cops_corners}.  

\medskip
\item {\bf Case 3:} the robber starts at $(7k+7,8\ell+8)$.  The cops' placement ensures that there are cops at $(7k+14,8\ell+8)$ and $(7k+7,8\ell+16)$, so the cops can capture the robber as in Case 1.
\end{itemize}
Thus the cops can capture the robber if he starts anywhere within the $9 \times 8$ block.  Since every vertex in the torus belongs to at least one such block, the cops can always capture the robber.

\begin{figure}[h]
\begin{center}
\begin{tikzpicture}
[inner sep=0mm, thick,
 vertex/.style={draw=black, circle, fill=black, minimum size=0.1cm},
 cvertex/.style={draw=black, ultra thick, fill=blue, circle, minimum size =0.2cm},
 dot/.style={draw=black,fill=black, circle, minimum size=0.04cm},
 vertexlabel/.style={draw=none, fill=none, shape=rectangle, inner sep=2pt, font=\small},
 textnode/.style={draw=none, fill=none, shape=rectangle, inner sep=2pt},
 xscale=1.0,yscale=1.0]

% grid
\foreach \x in {0,...,7}
    \foreach \y in {0,...,8} 
		    \node at (\x,\y) [vertex] {};
\foreach \x in {0,...,6}
    \draw (\x,-0.7) -- (\x,8.7);
\draw (7,-0.15) -- (7,8.7);
\foreach \y in {0,...,8}
    \draw (-0.7,\y) -- (7.7,\y);		

% label a couple special vertices
\node at (6.2,-0.3) [anchor=north west] {$(7k+7,8\ell+8)$};
\node at (-0.2,8.2) [anchor=south east] {$(7k,8\ell)$};

% cop vertices
\node (c1) at (0,0) [cvertex] {};
\node at ($(c1)+(-0.18,-0.18)$) [anchor=north east] {$c_1$};
\node (c2) at (7,8) [cvertex] {};
\node at ($(c2)+(0.18,0.18)$) [anchor=south west] {$c_2$};

% vertices within distance 5 of a cop
\draw[blue, ultra thick, fill=blue!50, fill opacity=0.15, rounded corners] ($(c1)+(-0.2,-0.2)$) -- ($(c1)+(-0.2,5.4)$) -- ($(c1)+(5.4,-0.2)$) -- cycle;
\draw[blue, ultra thick, fill=blue!50, fill opacity=0.15, rounded corners] ($(c2)+(0.2,0.2)$) -- ($(c2)+(0.2,-5.4)$) -- ($(c2)+(-5.4,0.2)$) -- cycle;

% vertices with corners within distance 6
\draw[purple, ultra thick, fill=purple!50, fill opacity=0.15, rounded corners] ($(c1)+(5.7,-0.2)$) -- ($(c1)+(7.2,-0.2)+(180:0.9)$) arc (180:90:0.9) -- ($(c2)+(0.2,-5.7)$) -- ($(c2)+(-5.7,0.2)$) -- ($(c1)+(-0.2,8.2)+(360:0.9)$) arc (360:270:0.9) -- ($(c1)+(-0.2,5.7)$) -- cycle;

% the two remaining vertices
\draw[red, ultra thick, fill=red!50, fill opacity=0.15, rounded corners] ($(c1)+(7.2,-0.2)+(180:0.7)$) arc (180:90:0.7) -- ($(c1)+(7.2,-0.2)$) -- cycle;
\draw[red, ultra thick, fill=red!50, fill opacity=0.15, rounded corners] ($(c1)+(-0.2,8.2)+(270:0.7)$) arc (270:360:0.7) -- ($(c1)+(-0.2,8.2)$) -- cycle;

\end{tikzpicture}
\end{center}
\caption{: the $9 \times 8$ block with upper-left corner $(7k,8\ell)$, for $k+\ell$ even.  Vertices within the two triangles satisfy the hypotheses of Lemma~\ref{lem:torus_helper}; vertices within the central region satisfy the hypotheses of Lemma~\ref{lem:torus_cops_corners}; the remaining two vertices satisfy neither.}
\label{fig:torus_cases}
\end{figure}
\end{proof}

Note that the lower bound on $\burn(T_{m,n})$ in Theorem~\ref{thm:torus} is about $mn/121$, while the upper bound is about $mn/112$.  With a somewhat more detailed argument, the lower bound can be improved to $mn/120$.  However, we suspect that in fact $\burn(T_{m,n}) \sim mn/112$, i.e. that the upper bound is asymptotically tight.

On the grid, the situation is slightly more complex.  The edges and corners of the grid contain vertices of low degree, which may allow the robber to isolate himself more quickly than he could in the middle of the grid.  Thus when playing on the grid, we must use more cops than when playing on the torus.

\begin{lemma}\label{lem:grid_robber} 
In the game on $G_{m,n}$, if there exists some vertex $v$ of degree 2 or 3 such that no cop starts within distance 5 of $v$, then the robber can win.  
\end{lemma}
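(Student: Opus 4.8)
The plan is to split on $\deg(v)$, treating the case $\deg(v)=2$ (so $v$ is a corner of the grid) separately from $\deg(v)=3$ (so $v$ lies on a side but is not a corner). In each case the robber starts at $v$ and carries out a short closed walk that burns every edge incident to some low-degree vertex he can reach, leaving himself stranded on an isolated vertex; Lemma~\ref{lem:robber_distance} — supplemented, in the harder case, by a direct look at the partially burnt graph — guarantees that the distant cops cannot interfere in time.

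For $\deg(v)=2$, say $v=(0,0)$. The robber walks around the unique unit square at the corner: $(0,0)\to(1,0)\to(1,1)\to(0,1)\to(0,0)$. After these four moves both edges at $(0,0)$ are burnt, so $(0,0)$ is isolated and the robber, sitting on it, can never be caught. To see he is not caught first, note that if $d_i$ denotes his distance from $v$ after his $i$th move then $(d_0,d_1,d_2,d_3)=(0,1,2,1)$, so $i+d_i\le 4<5$ for $0\le i\le 3$; since no cop starts within distance $5$ of $v$, Lemma~\ref{lem:robber_distance} (with $d=5$, $k=4$) shows no cop captures the robber before his fourth move. On the fourth move the last edge at $v$ is burnt, and since no cop has had time to get within distance $2$ of $v$, no cop can be on $v$ either, so the robber safely lands on the now-isolated vertex.

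For $\deg(v)=3$, say $v=(i,0)$ with $0<i<n-1$. Because $v$ has odd degree the robber cannot isolate $v$ itself while standing on it (a closed walk burns an even number of edges at $v$), so instead he isolates one of $v$'s neighbours on the boundary — say $w=(i+1,0)$, ending there — which is parity-consistent because he starts off $w$. A suitable five-move walk is $(i,0)\to(i+1,0)\to(i+2,0)\to(i+2,1)\to(i+1,1)\to(i+1,0)$, which burns all three edges at $w$ and strands the robber on $w$. There is also a mirror-image walk isolating $(i-1,0)$, and the robber should pick whichever direction keeps him clear of the cops; if $v$ is one step from a corner he can instead step onto the corner and fall back on the $\deg(v)=2$ argument. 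Along this walk the distances from $v$ give $i+d_i$ as large as $6$, so Lemma~\ref{lem:robber_distance} by itself rules out only cops starting at distance $\ge 7$ from $v$; for a cop at distance exactly $6$ one must argue by hand, using the edges $vw$ and $w\text{-}(i+2,0)$ already burnt, that the chosen excursion cannot be intercepted before it is completed.

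The first case is routine; the main obstacle is the $\deg(v)=3$ case, and specifically the verification that the isolating walk cannot be blocked. Unlike the corner case, Lemma~\ref{lem:robber_distance} is not strong enough on its own (the walk strays to distance $3$ from $v$), so one must bound the cops' progress in the graph with the robber's earlier edge deletions in force, and one must handle carefully the choice of excursion direction and the sub-cases where $v$ is near a corner of the grid. I expect essentially all of the work to be in this bookkeeping — in particular, in confirming that a cop starting at distance $6$ from $v$ cannot already be lying in wait on the robber's path.
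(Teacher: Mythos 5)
Your corner case ($\deg(v)=2$) is sound and essentially matches the paper's argument. The degree-3 case, however, contains a genuine gap, and it is precisely the step you defer at the end: the verification that a cop starting at distance $6$ from $v$ cannot lie in wait on your path is not bookkeeping --- it is false for the walk you chose. Your excursion $(i,0)\to(i+1,0)\to(i+2,0)\to(i+2,1)\to(i+1,1)\to(i+1,0)$ puts the robber on $(i+2,1)$, at distance $3$ from $v$, after his third move. A cop who starts at $(i+4,2)$ is at distance exactly $6$ from $v$ (so permitted by the hypothesis) but at distance $3$ from $(i+2,1)$, along a path none of whose edges the robber ever burns; she reaches $(i+2,1)$ on her third turn and the robber walks into her (or she takes him on turn four if he freezes at $(i+2,0)$). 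The mirror excursion is blocked symmetrically by a cop at $(i-4,2)$, and both cops may be present simultaneously, so choosing "whichever direction keeps him clear" does not rescue the strategy. Your near-corner fallback has the same defect: for $v=(1,0)$ a cop at $(0,5)$ is at distance $6$ from $v$ yet reaches $(0,1)$ in four unobstructed steps and catches the robber on the corner circuit in the orientation you wrote it.

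The paper sidesteps all of this by not starting the robber on $v$ and by never letting him reach distance $3$ from $v$. For $v=(i,0)$ with $i\ge 2$ the robber begins at $(i-2,0)$ and walks $(i-2,0)\to(i-1,0)\to(i-1,1)\to(i,1)\to(i,0)\to(i-1,0)$, which burns all three edges at $(i-1,0)$ and strands him there. Every vertex of this walk is within distance $2$ of $v$, so $k+d_k\le 4<5$ for $0\le k\le 4$, and Lemma~\ref{lem:robber_distance} with $d=5$ already excludes every cop not within distance $5$ of $v$ --- no supplementary analysis of the partially burnt graph is required. The case $i\le 1$ is folded into the corner argument by measuring distances from $(1,0)$ rather than from the corner. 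To repair your proof you would need to replace your excursion with one that never strays to distance $3$ from $v$; as written, the one claim you leave unverified is the one that fails.
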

\begin{proof} 
Suppose no cop starts within distance 5 of some vertex $v$ having degree 2 or 3.  By symmetry, we may suppose $v = (i,0)$ for some $i \in \{0, 1, \dots, n-1\}$.  We consider two cases.

\begin{itemize}
\item {\bf Case 1:} $i \le 1$.  In this case, the robber can isolate himself on $(0,0)$ by starting on $(0,0)$ and moving down to $(0,1)$, right to $(1,1)$, up to $(1,0)$, and left to $(0,0)$.  (Refer to Figure~\ref{fig:grid_rcorner}.)  Let $d_k$ denote the robber's distance from $(1,0)$ after $k$ moves.  The robber's strategy ensures that $k + d_k < 4$ for $0 \le k \le 3$.  Since no cop starts within distance 5 of $v$ it follows that no cop starts within distance 4 of $(1,0)$, so by Lemma~\ref{lem:robber_distance}, no cop can reach the robber before his fourth turn.  Hence the robber successfully isolates himself and thus wins.

\medskip
\item {\bf Case 2:} $i \ge 2$.  In this case, the robber can isolate himself on $(i-1,0)$ by starting on $(i-2,0)$ and moving right to $(i-1,0)$, down to $(i-1,1)$, right to $(i,1)$, up to $(i,0)$, and left to $(i-1,0)$.  (Refer to Figure~\ref{fig:grid_rborder}.)  Letting $d_k$ denote the robber's distance from $v$ after $k$ moves, the robber's strategy ensures that $k + d_k < 5$ for $0 \le k \le 4$, so by Lemma~\ref{lem:robber_distance}, no cop can reach the robber before he has isolated himself.
\end{itemize}
\end{proof}

\begin{figure}[h]
\begin{center}
\begin{tikzpicture}
[inner sep=0mm, thick,
 vertex/.style={draw=black, circle, fill=black, minimum size=0.1cm},
 cvertex/.style={draw=black, fill=blue, circle, minimum size =0.2cm},
 rvertex/.style={draw=black, fill=red, circle, minimum size =0.2cm},
 dot/.style={draw=black,fill=black, circle, minimum size=0.04cm},
 vertexlabel/.style={draw=none, fill=none, shape=rectangle, inner sep=2pt, font=\small},
 textnode/.style={draw=none, fill=none, shape=rectangle, inner sep=2pt},
 xscale=1.0,yscale=1.0]
 
 % generate grid
 
 \foreach \x in {0,...,2}
    \foreach \y in {0,...,2} 
		    \node at (\x,\y) [vertex] {};

\foreach \x in {0,...,2}
    \draw (\x,-0.5) -- (\x,2);

\foreach \y in {0,...,2}
    \draw (0,\y) -- (2.5,\y);

% robber vertex
    
\node (r) at (0,2) [rvertex] {};
\node at ($(r)+(-0.1,0.2)$) [anchor=east] {$v$};

% robber's path

\node (r1) at (0,1) {};
\node (r2) at (1,1) {};
\node (r3) at (1,2) {};

% draw and label arrows

\draw[->,red,ultra thick] (r) -- (r1) node[midway,vertexlabel, anchor=east] {$1$};
\draw[->,red,ultra thick] (r1) -- (r2) node[midway,vertexlabel, anchor=north] {$2$};
\draw[->,red,ultra thick] (r2) -- (r3) node[midway,vertexlabel, anchor=west] {$3$};
\draw[->,red,ultra thick] (r3) -- (r) node[midway,vertexlabel, anchor=south] {$4$};

\end{tikzpicture}
\end{center}
\caption{: Robber strategy for Case 1 of Lemma~\ref{lem:grid_robber}.}
\label{fig:grid_rcorner}
\end{figure}

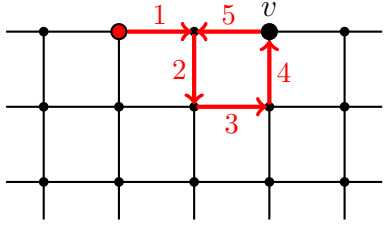
\begin{figure}[h]
\begin{center}
\begin{tikzpicture}
[inner sep=0mm, thick,
 vertex/.style={draw=black, circle, fill=black, minimum size=0.1cm},
 bigvertex/.style={draw=black, circle, fill=black, minimum size=0.2cm},
 cvertex/.style={draw=black, fill=blue, circle, minimum size =0.2cm},
 rvertex/.style={draw=black, fill=red, circle, minimum size =0.2cm},
 dot/.style={draw=black,fill=black, circle, minimum size=0.04cm},
 vertexlabel/.style={draw=none, fill=none, shape=rectangle, inner sep=2pt, font=\small},
 textnode/.style={draw=none, fill=none, shape=rectangle, inner sep=2pt},
 xscale=1.0,yscale=1.0]
 
 % generate grid
 
 \foreach \x in {0,...,4}
    \foreach \y in {0,...,2} 
		    \node at (\x,\y) [vertex] {};

\foreach \x in {0,...,4}
    \draw (\x,-0.5) -- (\x,2);

\foreach \y in {0,...,2}
    \draw (-0.5,\y) -- (4.5,\y);

% robber vertex
    
\node (r) at (1,2) [rvertex] {};
\node (v) at (3,2) [bigvertex] {};
\node at ($(v)+(0,0.2)$) [anchor=south] {$v$};

% robber's path

\node (r1) at (2,2) {};
\node (r2) at (2,1) {};
\node (r3) at (3,1) {};
\node (r4) at (3,2) {};

% draw and label arrows

\draw[->,red,ultra thick] (r) -- (r1) node[midway,vertexlabel, anchor=south] {$1$};
\draw[->,red,ultra thick] (r1) -- (r2) node[midway,vertexlabel, anchor=east] {$2$};
\draw[->,red,ultra thick] (r2) -- (r3) node[midway,vertexlabel, anchor=north] {$3$};
\draw[->,red,ultra thick] (r3) -- (v) node[midway,vertexlabel, anchor=west] {$4$};
\draw[->,red,ultra thick] (v) -- (r1) node[midway,vertexlabel, anchor=south] {$5$};

\end{tikzpicture}
\end{center}
\caption{: Robber strategy for Case 2 of Lemma~\ref{lem:grid_robber}.}
\label{fig:grid_rborder}
\end{figure}

\begin{theorem}\label{thm:grid} 
For all positive integers $m$ and $n$,  
$$\ceil{\frac{mn}{121}} \leq \burn(G_{m,n}) \leq 2\floor{\frac{m}{16}} \floor{\frac{n}{14}} + 3 \left (\floor{\frac{m}{5}} + \floor{\frac{n}{5}} \right ) + 4.$$
\end{theorem}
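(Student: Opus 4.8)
The plan is to prove the two bounds separately: the lower bound is a direct adaptation of the corresponding half of Theorem~\ref{thm:torus}, while the upper bound requires an explicit cop placement tailored to the presence of a boundary. For the lower bound I would fix an arbitrary initial placement on $G_{m,n}$ and assign weights to vertices as in the torus argument, but now in two flavors: to each degree-$4$ (interior) vertex $v$ assign weight $k + \ell/2$, where $k$ is the number of cops within distance $5$ of $v$ and $\ell$ the number within distance $9$ but not $5$; to each vertex $v$ of degree $2$ or $3$ assign weight equal to the number of cops within distance $5$ of $v$. If some interior vertex has weight less than $1$, Lemma~\ref{lem:grid_rinside} gives the robber a win; if some boundary vertex has weight less than $1$, Lemma~\ref{lem:grid_robber} does. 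Hence for a cop win every vertex has weight at least $1$, so the total weight is at least $mn$. Since a single cop lies within distance $5$ of at most $61$ vertices and within distance $9$ but not $5$ of at most another $120$, each cop contributes at most $61 + 120/2 = 121$ to the total weight, giving $\burn(G_{m,n}) \ge mn/121$ and hence the stated bound. (For $m$ or $n$ at most $2$ one falls back on Proposition~\ref{prop:simple} or Theorem~\ref{thm:2byn}, so I would assume $m, n \ge 3$ throughout.)

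For the upper bound I would exhibit an explicit placement built from three families of cops. The first is an \emph{interior} family arranged in the same checkerboard pattern of $9 \times 8$ blocks used in Theorem~\ref{thm:torus}, but tiling only a central sub-rectangle of $G_{m,n}$ and leaving a margin of bounded width along all four sides; this accounts for the $2\floor{m/16}\floor{n/14}$ term. The second is an \emph{edge} family filling the four boundary margins at a density of roughly $1.5$ cops per five columns along the top and bottom and per five rows along the sides, accounting for the $3(\floor{m/5}+\floor{n/5})$ term. The third is a bounded number of cops near the four corners, accounting for the $+4$. Against a robber who starts in the central sub-rectangle, I would argue the appropriate block cops win: Lemmas~\ref{lem:torus_helper} and \ref{lem:torus_cops_corners} are proved using only the degree-$4$ structure within distance $7$ of the relevant vertices, so their proofs apply verbatim to the interior of the grid, and once some cop reaches the robber's starting vertex, the pursuit of Lemma~\ref{lem:torus_starting_vx} finishes the job -- the key point being that the edge and corner cops confine the robber to the region of degree-$4$ vertices, so the parity argument underlying that lemma stays valid on the grid. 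Against a robber who starts in a margin or near a corner, the local picture resembles a grid of bounded height (or a corner region), and the band and corner cops should capture him by a case analysis generalizing Lemmas~\ref{lem:2byn_robber_outside} and \ref{lem:2byn_middle}, with the three-cops-per-five-columns density playing the role that the separation bound of $9$ plays in the $2 \times n$ analysis.

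The hard part will be the boundary strategy and the bookkeeping around it. Three things need care. First, specifying the edge bands precisely -- how many rows thick, and how the cops alternate so as to realize exactly the implicit rate in the formula -- and proving that such a band genuinely \emph{captures}, rather than merely blocks, a robber who loiters near a side, including the case where he repeatedly probes toward the boundary and retreats. Second, handling the seam between the interior blocks and the edge bands, so that a robber starting near the inner edge of a margin is covered by one regime or the other with no gap. Third, the corner regions, where two edge bands meet the interior and where the low-degree corner vertex makes self-isolation easiest, so that the constant $4$ really suffices. I expect the corner and seam analysis, rather than any single clean idea, to be where the bulk of the work lies, and also where the main risk of an off-by-a-constant error in the final count arises.
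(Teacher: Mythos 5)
Your proposal follows the paper's proof in all essentials. The lower bound is exactly the paper's argument (the paper merely says it is ``the same as for the torus,'' and your two-flavor weighting --- Lemma~\ref{lem:grid_robber} for the degree-2 and degree-3 vertices, Lemma~\ref{lem:grid_rinside} for the interior, $61+120/2=121$ per cop --- is the correct way to spell that out). Your upper-bound skeleton also matches: an interior checkerboard of $9\times 8$ blocks giving the $2\floor{m/16}\floor{n/14}$ term, boundary cops at a combined rate of three per five rows/columns, a constant for the corners, and, crucially, the observation that guarding the border preserves the parity argument of Lemma~\ref{lem:torus_starting_vx} so that the torus lemmas remain usable. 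The one genuine divergence is precisely the part you flag as the ``hard part'': you plan to capture a robber who starts in a margin by a fresh case analysis generalizing the $2\times n$ Lemmas~\ref{lem:2byn_robber_outside} and~\ref{lem:2byn_middle}, which would be a large amount of new work in a band up to eight rows deep and is where your cop count is most at risk. The paper sidesteps this: its border-patrol cops (one per five columns in row $1$, etc.) are used only to block entry to the outermost rows and columns by a purely local, essentially one-dimensional argument, while a separate family of ``peripheral'' cops is placed in two staggered lines (rows $m-8$ and $m-2$, columns $n-8$ and $n-2$) so that every remaining margin vertex lies within distance $5$ of one of them; Lemma~\ref{lem:torus_helper}, valid on the grid once the parity/connectivity argument is restored, then finishes the capture with no new case analysis. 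So your decomposition and counts are right, but you should redirect the margin cops toward distance-5 coverage and let the torus lemmas do the capturing rather than rebuilding the $2\times n$ machinery.
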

\begin{proof} 
The lower bound follows by the same argument as in Theorem~\ref{thm:grid}.  For the upper bound, we give a winning cop strategy.  First, we place cops at vertices $(1,5k+2)$ and $(n-2,5k+2)$ for all $k$ such that $0 \le 5k+2 < m$, along with vertices $(1,m-1)$ and $(n-2,m-1)$.  Similarly, we place cops at vertices $(5\ell+2,1)$ and $(5\ell+2,m-2)$ for all $\ell$ such that $0 \le 5\ell+2 < n$, along with vertices $(n-1,1)$ and $(n-1,m-2)$.  These cops will prevent the robber from safely visiting the border of the grid; we refer to them as the {\em border patrol} cops.  Next, we place some {\em central cops} in a manner similar to that used in the proof of Theorem \ref{thm:torus}: for all nonnegative integers $k$ and $\ell$ such that $7k < n$, $8\ell < m$, and $k+\ell$ is odd, we place a cop at $(7k,8\ell)$.  Finally, we place {\em peripheral cops} in columns $0, 10, 20, \dots$ of row $m-8$ and in columns $5, 15, \dots$ of row $m-2$.  Similarly, we place $\floor{m/5}$ peripheral cops in rows $0, 10, 20, \dots$ of column $n-2$ and in rows $5, 15, \dots$ of column $n-8$.

Note that there are at most $2\floor{m/5} + 2\floor{n/5} + 4$ border patrol cops, at most $\frac{1}{2}\floor{m/8}\floor{n/7}$ central cops, and $\floor{m/5}+\floor{n/5}$ peripheral cops, so the total number of cops used is at most 
$$2\floor{\frac{m}{16}} \floor{\frac{n}{14}} + 3 \left (\floor{\frac{m}{5}} + \floor{\frac{n}{5}} \right ) + 4$$
as claimed.

To show that the cops can win from this starting position, we first show that the border patrol cops can prevent the robber from safely visiting the border of the grid.  In particular, we give a strategy for the border patrol cops placed on row 1; the other border patrol cops use a symmetric strategy.  Consider a border patrol cop $c$ who begins on vertex $(k,1)$.  We say that columns $k-2, k-1, \dots, k+2$ are {\em assigned} to $c$.  Throughout the bulk of the game, $c$ moves left or right within row 1 and within her assigned columns.  On her turn, if the robber is not in row 0, then $c$ moves horizontally toward the robber, except that she never moves farther left than column $k-2$ or farther right than column $k+2$.  (If $c$ is in the same column as the robber, then she remains in place.)  

Suppose the robber begins at vertex $(i,j)$. To show that $c$ prevents the robber from safely entering row 0 of her assigned columns, we consider three cases.
\begin{itemize}
\item {\bf Case 1:} $j \ge 2$.  In this case, $c$ can make at least two moves before the robber enters row 0.  With these moves she can either reach the same column as the robber or the assigned column closest to him, and henceforth it is clear that she can prevent the robber from entering row 0 in her assigned columns.  

\medskip
\item {\bf Case 2:} $j = 1$.  As before, $c$ clearly prevents the robber from entering row 0 in her assigned columns unless $i \in \{k-2, k+2\}$.  Suppose without loss of generality that $i = k+2$.  On her first turn, $c$ moves right from $(k,1)$ to $(k+1,1)$.  The robber cannot remain in place or move left, lest $c$ capture him.  He cannot move right, since column $k+3$ either does not exist or, if it does exist, is assigned to another border patrol cop, who must currently occupy $(k+4,1)$.  Thus, he must move up to $(k+2,0)$ or down to $(k+2,2)$.  In the former case, $c$ moves up to $(k+1,0)$, and the robber is trapped: he cannot move down, he cannot remain still or move left lest $c$ capture him, and he cannot move right (since either column $k+3$ does not exist or, if it does, its assigned cop prevents him from entering).  In the latter case, the cops can clearly prevent the robber from ever returning to row 1 and hence from ever reaching row 0.

\medskip
\item {\bf Case 3:} Finally, suppose $j=0$.  The robber cannot move down to row 1 on his first turn, since this would result in capture by whichever cop was assigned column $i$.  However, if he remains in row 0 on his first turn, then the cops prevent him from safely leaving row 0.  Now he cannot remain still indefinitely or he will be captured, but he cannot move indefinitely since he will eventually run out of edges.  Thus, the cops eventually capture him.
\end{itemize}
We may thus suppose that the robber never enters row 0, row $m-1$, column 0, or column $n-1$.

Next, we establish analogues of Lemmas~\ref{lem:torus_starting_vx}, \ref{lem:torus_helper}, and  \ref{lem:torus_cops_corners} for use on $G_{m,n}$.  The proof of Lemma~\ref{lem:torus_starting_vx} does not apply on the grid because the grid has vertices of odd degree.  However, the border patrol cops prevent the robber from entering any of these vertices.  Thus these vertices continue to have odd degree throughout the game and, moreover, belong to the same component at all times.  It now follows, as in the proof of Lemma~\ref{lem:torus_starting_vx} that the robber must always occupy the same component as his starting vertex; consequently, if some cop can reach the robber's starting vertex, then she can capture him.  Thus we may henceforth apply Lemma~\ref{lem:torus_starting_vx}, from which it follows that we may also apply Lemmas~\ref{lem:torus_helper} and \ref{lem:torus_cops_corners} as well (provided in both cases that the robber begins at a vertex of degree 4, but the border patrol cops force him to do so).

To complete the proof, suppose the robber begins at vertex $(i,j)$.  If $i=1$, $j=1$, $i=n-1$, or $j=m-1$, then the border patrol cops can capture the robber as explained above.  If $2 \le i \le n-8$ and $2 \le j \le m-9$, then the central cops can capture him as in the proof of Theorem~\ref{thm:torus}.  Otherwise, some peripheral cop begins within distance 5 of the robber and so can capture him by Lemma~\ref{lem:torus_helper}.
\end{proof}

To finish this section, we determine the bridge-burning cop number of the $n$-dimensional hypercube $Q_n$ -- that is, the $n$-dimensional grid with vertices in $\{0,1\}^n$.

\begin{theorem}\label{thm:hypercube}
For all positive integers $n$, we have $\burn(Q_n) = 1$.
\end{theorem}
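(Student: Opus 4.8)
The plan is to maneuver the single cop onto the robber's starting vertex and then invoke a parity argument in the spirit of Lemma~\ref{lem:torus_starting_vx}. Since $Q_n$ is vertex-transitive we may assume the cop starts at $\mathbf{0}=(0,\dots,0)$; the robber then starts at some vertex $v$ with $\dist(\mathbf{0},v)=d\le n$. The first step is to show that the cop can reach $v$ within $n$ rounds no matter how the robber plays. The cop simply walks, each round, one step along a shortest path to $v$ in the \emph{current} graph. Deleting a single edge of $Q_n$ leaves unchanged the distance between any two vertices that were at distance at least $2$ (geodesics between such a pair are plentiful, and no single edge lies on all of them), so $\dist(\text{cop},v)$ strictly decreases each round as long as it exceeds $1$. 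The only ways this progress can stall before round $n$ are that the robber burns the last edge joining the cop to $v$ -- impossible, since traversing an edge incident to $v$ would force the robber onto $v$ or onto the cop -- or that the robber disconnects $v$ from the cop outright. The latter requires the robber to burn an entire edge cut of $Q_n$, which has at least $n$ edges and hence takes at least $n$ robber moves; but the robber has made fewer than $n$ moves by the time of the cop's $n$th move, so this cannot happen either. Hence the cop stands on $v$ by round $n$.

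If $n$ is even, every vertex of $Q_n$ has even degree, so the instant the cop occupies $v$ she captures the robber by Lemma~\ref{lem:torus_starting_vx}, and we are done. If $n$ is odd, we argue by hand. From the moment the cop reaches $v$ she adopts the strategy of Lemma~\ref{lem:torus_starting_vx}: she stays no farther from $v$ than the robber is. This permanently bars the robber from returning to $v$, and hence from burning any edge incident to $v$, so $v$ retains degree at least $n-1\ge 2$ and is never isolated; in particular the cop can never be trapped. As in the proof of Lemma~\ref{lem:torus_starting_vx}, this strategy also prevents the robber from stalling forever while he and the cop lie in a common component, so the robber's only path to victory is to burn a complete edge cut that separates him from the cop.

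Ruling this out is the main obstacle. The approach I would take is to show that building a separating cut is strictly slower than the cop's pursuit. On one hand, burning every edge of a cut $C$ of $Q_n$ requires many more than $|C|$ robber moves: the cut edges do not form an Eulerian subgraph -- the minimum cuts are stars $K_{1,n}$, which for odd $n\ge 3$ have $n\ge 3$ vertices of odd degree -- so the robber's trail of burnt edges must weave through additional connecting edges between successive cut edges; moreover a parity count on the degree of the trail at $v$ forbids the robber from isolating himself on his own starting vertex. On the other hand, once she sits on $v$ the cop's distance to the robber strictly decreases on every round in which he does not move toward $v$, and retreating toward $v$ only buys the cop time, so capture is forced within a bounded number of additional rounds. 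Reconciling these two estimates -- making precise that assembling a disconnecting cut always costs the robber more moves than the cop needs to finish him off -- is the crux of the odd case, and is where I expect the real work to lie; the even case, by contrast, is handled in one line by Lemma~\ref{lem:torus_starting_vx}.
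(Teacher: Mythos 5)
Your strategy is genuinely different from the paper's, which never attempts to occupy the robber's starting vertex: there the cop starts at $(1,1,\dots,1)$ and plays a coordinate-mirroring strategy, always maintaining some ``clean'' dimension that the robber has never visited (so every edge incident to that dimension is intact) while shrinking the Hamming distance; once the two positions differ only in a clean coordinate, mirroring forces capture. That argument is uniform in $n$ and needs no parity hypothesis. Your route, by contrast, has two concrete gaps. First, the claim that the cop reaches the robber's starting vertex $v$ within $n$ rounds rests on the observation that deleting a \emph{single} edge of $Q_n$ preserves distances between vertices at distance at least $2$ --- but from round $2$ onward the cop is navigating $Q_n$ minus \emph{several} burnt edges, where geodesics are no longer plentiful and one more deletion can increase distances. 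Already in $Q_3$, deleting the two edges $(010)(011)$ and $(001)(011)$ raises $\dist(000,011)$ from $2$ to $4$, and that is only $n-1$ deletions. So the asserted strict decrease of $\dist(\mathrm{cop},v)$ does not follow, and the ``fewer than $n$ robber moves'' bound you use to rule out disconnection is pinned to a timeline you have not established. Even the weaker statement needed for Lemma~\ref{lem:torus_starting_vx} --- that the cop can reach $v$ \emph{eventually} --- is therefore not proved, so the even case is not yet closed.

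Second, and more seriously, the odd case is left open by your own account: Lemma~\ref{lem:torus_starting_vx} is powered entirely by the even-degree parity count, which is unavailable when $n$ is odd, and the replacement you sketch --- that assembling a disconnecting edge cut always costs the robber more moves than the cop needs to finish the pursuit --- is exactly the statement that would need a proof. You correctly identify it as ``where the real work lies,'' but that work is absent, so half of all values of $n$ (including $n=1,3$) are unaddressed. I would suggest abandoning the starting-vertex lemma here and instead exploiting the product structure of $Q_n$ directly, as the paper does: track which coordinates the robber has ever set to $1$, and argue that the cop can keep at least one untouched coordinate in reserve until she is adjacent to the robber.
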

\begin{proof}
It suffices to give a strategy for one cop to capture the robber on $Q_n$.  As usual, we view the vertex set of $Q_n$ as $\{0,1\}^n$.  We refer to a vertex with a 1 in its $k^{\text{th}}$ coordinate as a vertex in the {\em $k^{th}$ dimension}.  Additionally, we say that the cop or robber {\it visits} the $k^{\text{th}}$ dimension by starting the game on or moving to a vertex in that dimension, i.e. by changing the $k^{\text{th}}$ coordinate of their position from 0 to 1.  Similarly, we will say that the cop or robber {\it leaves} the $k^{\text{th}}$ dimension by moving from a vertex inside the dimension to one outside, i.e. by changing the $k^{\text{th}}$ coordinate of their position from 1 to 0.

The cop begins at vertex $(1,1,\dots,1)$.  We claim that if, after some cop turn, there exists some $k$ such that the players' positions differ only in coordinate $k$ and the robber hasn't yet visited the $k^{\text{th}}$ dimension, then the cop has a winning strategy.

Indeed, if this situation arises, then all edges incident to vertices in dimension $k$ must be present since the robber hasn't visited the $k^{\text{th}}$ dimension.  In addition, the cop must be in the $k^{\text{th}}$ dimension, since the players' positions differ in coordinate $k$.  From this point on, the cop always mirrors the robber's move: that is, if the robber changes his $j^{\text{th}}$ coordinate, then the cop changes hers to match.  (Of course, if the robber changes his $k^{\text{th}}$ coordinate, then he has moved onto the cop's vertex and thus loses; if the robber remains on his current vertex, then the cop moves onto the robber's vertex and wins, which is possible since all edges incident to vertices in the $k^{\text{th}}$ dimension are still present.)  The cop can always do this, since all edges joining vertices in the $k^{\text{th}}$ dimension are still present.  Moreover, the cop's strategy prevents the robber from ever visiting the $k^{\text{th}}$ dimension and thus ensures that after all cop turns, the two players' positions agree in all coordinates except the $k^{\text{th}}$.  Since the graph is finite, the robber cannot keep moving forever, so the cop eventually captures him.

Hence, we need only show that the cop can always reach a vertex adjacent to the robber and in a dimension that the robber has not yet visited.  On her first turn, the cop moves closer to the robber in any coordinate she wishes.  Henceforth, the cop plays as follows.  If the robber moves away from the cop by changing his $j$th coordinate, then the cop changes her $j$th coordinate in the same way; if the robber sits still or moves closer to the cop, then the cop takes one step closer to the robber (in any direction she wishes).  As long as there is some dimension $k$ that the cop occupies and the robber has never visited, all edges in dimension $k$ are intact and hence the cop can always employ this strategy.  It suffices to show that some such dimension always exists up until the point where the cop reaches a vertex adjacent to the robber.

Suppose for the sake of contradiction that the robber can visit all $n$ dimensions while avoiding capture.  The cop's strategy ensures that once the players' positions agree in some coordinate, they will continue to do so after every cop turn so long as the robber has not visited all $n$ coordinates.  Hence once the robber has visited $n-1$ different dimensions, the players' positions agree in $n-1$ coordinates, meaning that the cop is adjacent to the robber -- from which it follows that the cop eventually wins.
\end{proof}

We remark that the argument used to prove Theorem~\ref{thm:hypercube} can be applied more generally to graphs of the form $G_1 \cart G_2 \cart \dots \cart G_n$, where each $G_i$ is one of $P_2$, $P_3$, and $C_3$.  The details are nearly identical to those given above and have been omitted.\\
\end{section}

% capture time
\begin{section}{Capture Time}\label{sec:capt}

In this section, we look not at the cop number, but at a related concept.  In the usual model of Cops and Robbers, the {\em capture time} of a graph $G$, denoted $\capt(G)$, is a measure of how quickly the cops can capture the robber.  Formally, $\capt(G)$ is the minimum number of rounds needed for the cops to guarantee a win, provided that there are exactly $c(G)$ cops.  Capture time was introduced for by Bonato et al.~\cite{BGHK09}, who showed that $\capt(G) \le n-3$ whenever $G$ has cop number 1; this bound was later improved by Gaven\v{c}iak~\cite{Gav10} to $\capt(G) \le n-4$ under the additional condition that $G$ has at least 7 vertices.

We denote capture time in the bridge-burning model by $\captb(G)$, and we aim to determine the maximum capture time of an $n$-vertex graph on which a single cop can win.  We start with an easy upper bound.
    
\begin{theorem}\label{thm:capt_upper} 
For any graph $G$ where one cop can capture the robber, $capt(G) = O(n^3)$. 
\end{theorem}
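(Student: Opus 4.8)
The plan is to extract from the hypothesis $\burn(G)=1$ not merely \emph{a} winning cop strategy but a fastest one, and then to bound its length by counting the robber's edge burns. First I would model a play by its sequence of \emph{configurations}, each recording the cop's vertex, the robber's vertex, the surviving edge set $F$, and whose turn it is; there are only finitely many configurations. I would then build the usual ranking for the cop as the reachability player: let $W_0$ be the set of configurations in which the cop already occupies the robber, and let $W_{k+1}$ consist of $W_k$ together with every configuration from which some cop move forces every robber reply into $W_k$; set $L(X)=\min\{k : X\in W_k\}$. Since $\burn(G)=1$, after the cop places optimally the initial configuration lies in $\bigcup_k W_k$, so $L$ is finite there. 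Have the cop play $L$-greedily, always choosing a move that witnesses membership in the current $W_L$. Then the rank of the current configuration decreases by at least one every round, \emph{no matter what the robber does}, so the play ends within $L(\text{initial})$ rounds.

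Next I would partition the play into \emph{static intervals}: maximal blocks of consecutive rounds in which the robber makes no move. The robber burns exactly the edges he traverses, and never the same edge twice, so he moves on at most $|E(G)|\le\binom n2$ rounds; hence there are at most $\binom n2+1$ static intervals. Throughout a single static interval the robber's vertex and the edge set $F$ are constant, so the configuration at the start of each of its rounds is pinned down by the cop's vertex alone, leaving at most $n$ possibilities; since the rank strictly decreases across the interval, no such configuration can recur, and so the interval spans fewer than $n$ rounds. Summing, the whole play lasts at most $\binom n2+\big(\binom n2+1\big)(n-1)=O(n^3)$ rounds, which gives $\captb(G)=O(n^3)$.

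The one point needing genuine care is the first step: verifying that $\burn(G)=1$ really puts the initial configuration into $\bigcup_k W_k$ (a standard attractor/determinacy fact for reachability games on finite arenas, combined with the cop placing her single cop so as to dominate every robber placement), and that $L$-greedy play lowers the rank \emph{even on the rounds where the robber elects not to move}. That last feature is precisely what a naive approach lacks: simply chasing the robber along a shortest path in the current graph need not win at all when $\burn(G)=1$, since the robber can bait the cop into a branch he then severs. Granting the ranking argument, everything else is bookkeeping, driven by the single structural observation that the edge set only shrinks and only on robber moves, so it changes at most $\binom n2$ times while each unchanged stretch costs the cop at most $n$ wasted moves.
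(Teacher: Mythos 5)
Your proof is correct and follows essentially the same route as the paper's: the robber moves at most $|E(G)|\le\binom{n}{2}$ times, and between consecutive robber moves an optimally playing cop never repeats a configuration, which bounds each static stretch by at most $n$ rounds and gives the product bound $O(n^3)$. Your attractor/rank construction is a rigorous justification of the one step the paper merely asserts (that an optimal cop ``will move on each turn and will never revisit a vertex while the robber remains in place''), so your write-up is, if anything, more careful than the published argument.
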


\begin{proof}
In a game on $G$, the robber can move at most $\size{E(G)}$ times.  Between moves, the robber can remain on his current vertex no more than $n$ times provided that the cop is playing optimally, since the cop will move on each turn and will never revisit a vertex while the robber remains in place.  Thus, the number of rounds needed for the cop to win is at most $\size{E(G)} \cdot n$, which is $O(n^3)$.
\end{proof}

One might expect capture times to be lower, in general, in the bridge-burning model of Cops and Robbers than in the ordinary model, since the graph necessarily becomes smaller as the game proceeds.  However, for a lower bound on the maximum capture time in the bridge-burning model, we give a graph $G$ with $\captb(G) = \Omega(n^2)$ -- an order of magnitude larger than the maximum capture time under the usual model!

\begin{theorem}\label{thm:capt_lower} 
There exists a graph $G$ such that $capt(G) = \Omega(n^2)$. 
\end{theorem}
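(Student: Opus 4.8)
The plan is to construct an explicit $n$-vertex graph $G$ with $\burn(G) = 1$ on which the cop, playing optimally, needs $\Omega(n^2)$ rounds to win. The basic intuition is that the robber should be forced to burn a long path of edges one at a time, but between each burned edge he should be able to stall for many rounds before being driven forward. Concretely, I would take a long path $v_0, v_1, \dots, v_\ell$ (with $\ell = \Theta(n)$) and attach to each internal vertex $v_i$ a small gadget — say a triangle or a short ``pendant cycle'' — on which the robber can loiter. The gadgets are chosen small enough (constant size, so the total vertex count stays $\Theta(\ell) = \Theta(n)$) and arranged so that $\burn(G) = 1$: a single cop can always eventually corner the robber because every gadget is a small cycle or clique, on which one cop wins by Proposition~\ref{prop:simple}(b) or \ref{prop:simple}(a), and the robber cannot disconnect himself into a cop-free component without first running out of room. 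The key design requirement is that when the robber sits in the gadget hanging off $v_i$, it takes the cop roughly $\Theta(i)$ rounds to travel down the spine from her current position and flush him out, at which point the robber burns one more spine edge and retreats into the next gadget, which is again $\Theta(i+1)$ rounds of travel away.

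The main steps, in order, are: (1) define $G$ precisely — the spine path together with the gadgets, stated so that $|V(G)| = n$; (2) prove $\burn(G) = 1$ by describing a single-cop strategy (start at one end, always move toward the robber; argue the robber can never isolate himself because the only way to disconnect $G$ is to burn a spine edge, and each time he does so the cop is already on the same side, so the reachable region strictly shrinks, ending in a small cycle/clique where capture is forced); (3) prove the lower bound $\captb(G) = \Omega(n^2)$ by exhibiting a robber strategy against the single cop that survives $\Omega(n^2)$ rounds. For step (3) the robber's plan is: start in the gadget at the far end $v_\ell$; whenever the cop is still more than a couple of steps away, sit still or circulate within the current gadget; only when the cop is about to enter the gadget, burn the adjacent spine edge $v_iv_{i-1}$ and slide into the gadget at $v_{i-1}$. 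Since the cop must then chase him back down roughly $i$ spine steps (having no shortcut — this is why the gadgets must not create short bypasses), the robber buys $\Theta(i)$ rounds at stage $i$; summing $\sum_{i=1}^{\ell} \Theta(i) = \Theta(\ell^2) = \Theta(n^2)$ rounds total.

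The hard part will be step (3), and specifically making the bookkeeping in the robber's stalling argument airtight: I need to verify that the cop genuinely has no faster route than walking the spine — i.e. that the gadgets do not accidentally provide shortcuts or let the cop ``cut the robber off'' at a later spine vertex while the robber is still loitering near an earlier one. This forces a careful choice of gadget (attaching each gadget at a single vertex $v_i$, so that the only $v_i$–$v_{i-1}$ route of length $< $ (something large) is the spine edge itself) and a careful invariant: after each robber turn, the cop's distance to the robber along the only available route is at least, say, the current spine-index of the robber, and the robber can always legally either stall or perform the ``burn-and-retreat'' maneuver without being caught in the act. I would package this as a potential-function / phase argument: phase $i$ lasts until the robber is forced off $v_i$'s gadget, each phase contributes $\Omega(i)$ rounds, and there are $\Theta(n)$ phases, whence $\captb(G) = \Omega(n^2)$. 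A secondary (but routine) obstacle is choosing constants in the gadget so that $\burn(G) = 1$ really holds — in particular ruling out the stalemate-type escapes of Figure~\ref{fig:stalemate} — which is handled by keeping every gadget a clique or an even cycle so Lemma~\ref{lem:torus_starting_vx} or Proposition~\ref{prop:simple}(a),(b) applies locally once the robber is confined.
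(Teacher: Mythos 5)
Your construction does not work, for two separate reasons. First, $\burn(G)=1$ fails for a path spine. Every spine edge $v_iv_{i+1}$ is a cut edge, and attaching constant-size gadgets at single vertices does not change that. The robber places himself after the cop, so he can start at $v_{\ell-1}$ (or at whichever end of the spine is far from the cop's starting vertex), traverse the final spine edge into the end gadget, and land in a component containing no cop. A single cop cannot be within striking distance of both ends of a $\Theta(n)$-long spine; this is exactly the obstruction behind Proposition~\ref{prop:simple}(c), which gives $\burn(P_n)=2$ for $n\ge 6$, and your gadgets do not remove it. Second, even granting $\burn(G)=1$, the phase accounting is wrong. The cop never ``travels down the spine from her current position'' over $\Theta(i)$ rounds: at the moment the robber abandons the gadget at $v_i$ and slides to $v_{i-1}$, the cop is adjacent to the gadget at $v_i$, i.e.\ at distance $O(1)$ from the robber, and she simply follows. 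Each phase therefore costs $O(1)$ rounds (a constant-size gadget plus one spine step), and the total is $O(n)$, not $\sum_i \Theta(i)=\Theta(n^2)$. The robber moves one step per turn just as the cop does, so he has no mechanism for re-opening a gap of size $\Theta(i)$ between phases.

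The paper's construction gets the quadratic bound from a completely different source: not travel distance, but edge count. It uses a dominating clique $v_1,\dots,v_k$ (so $\burn(G)=1$ is immediate from Theorem~\ref{thm:dominating_cliques}), pendant vertices $u_i$ that punish the cop for ever leaving the clique, and a complete $k$-partite graph on $\Theta(n)$ vertices whose $\Theta(n^2)$ edges are all available to the robber. Since that subgraph is regular of even degree, the robber fixes an Eulerian circuit and, each time the cop forces him to move, takes exactly one step along it, burning exactly one edge. He cannot be captured before the circuit is exhausted, which takes $\Theta(n^2)$ forced moves and hence $\Theta(n^2)$ rounds. If you want to salvage a spine-based idea you would need the robber to be forced to traverse $\Omega(n^2)$ edges in total, which a linear spine with constant-size gadgets cannot supply.
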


\begin{proof} 
Let $k,m$ be positive integers with $m(k-1)$ even.  Consider the graph $G_{m,k}$ formed as follows.  Begin with a complete graph with vertices $v_1,...v_k$ and a complete $k$-partite graph with partite sets $S_1,S_2,...,S_k$, each containing exactly $m$ vertices.  For all $i \in \{1, \dots, k\}$, add an edge between $v_i$ and every vertex in $S_i$.  Finally, to each vertex $v_i$, add a pendant neighbor $u_i$.

%We first note that the cop must start on one of the vertices $v_1,...,v_k$, for if the cop starts on a vertex in $M_i$ or on $u_i$, then the robber can start on $v_j$ where $i \neq j$ and isolate himself on $u_j$ after one round and win the game.

Vertices $v_1,...,v_k$ form a dominating clique of $G$, so by Theorem \ref{thm:dominating_cliques}, a single cop can capture the robber.  To show that the cop cannot win too quickly, we give a strategy for the robber to avoid capture for a long while.  If the cop begins anywhere except on some $v_i$, then the robber begins on some $v_j$ not adjacent to the cop; on the robber's first turn, he moves to $u_j$, thereby isolating himself and winning the game.  Thus we may suppose the cop begins on some $v_i$.  

The robber starts on a vertex in $S_j$ for some $j \not = i$.  In addition, the robber fixes some Eulerian cycle in the subgraph of $G$ induced by $S_1 \cup S_2 \cup \dots \cup S_k$; he can do so because this subgraph is regular of degree $m(k-1)$, which by assumption is even. If the cop moves to $u_i$ or to some vertex in $S_i$, then the robber moves to $v_j$ and, on his next turn, to $u_j$, thereby isolating himself.  If the cop moves to $v_{\ell}$ for some $\ell \not = j$, then the robber remains on his current vertex.  Finally, suppose the cop moves to $v_j$.  The robber must move, so he moves to the next vertex in his chosen Eulerian cycle.  The robber maintains this strategy until he has completed the entire cycle, at which point he remains on his current vertex and awaits his imminent capture.
  
The robber's strategy ensures that he cannot be captured before burning all edges of the complete multipartite graph induced by $S_1 \cup S_2 \cup \dots \cup S_k$, of which there are $(mk \cdot m(k-1))/2$.  Thus the number of rounds needed for the cop to win is at least $m^2 k(k-1)/2 + 1$.  Letting $\size{V(G)} = n$, we have $n = km+2k$, so $n = mk + O(k)$ and so
$$\captb(G) \ge m^2 k(k-1)/2 + 1 = \Omega(n^2).$$
\end{proof} 

The lower bound in Theorem~\ref{thm:capt_lower} differs by an order of magnitude from the upper bound in Theorem~\ref{thm:capt_upper}; we conjecture that the upper bound gives the correct order of growth.

\begin{conj}\label{conj_capt}
There exists an $n$-vertex graph $G$ with $\burn(G) =1$ and $\captb(G) = \Omega(n^3)$.
\end{conj}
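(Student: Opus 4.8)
The goal is a graph $G$ on $n$ vertices with $|E(G)| = \Theta(n^2)$ on which a single cop wins but on which the robber can force $\Theta(n)$ rounds of stalling before essentially every edge he burns; since the robber will be made to burn $\Theta(n^2)$ edges, multiplying gives $\captb(G) = \Omega(n^3)$, matching the upper bound in the proof of Theorem~\ref{thm:capt_upper}. The natural place to start is the construction in the proof of Theorem~\ref{thm:capt_lower}: a dense ``core'' (a complete multipartite graph carrying $\Theta(n^2)$ edges) surrounded by hub vertices $v_1,\dots,v_k$ and pendant ``panic vertices'' $u_i$, through which the cop is forced to herd the robber. There the cop loses only one round per burned edge because the hubs induce a clique, so she can always re‑threaten the robber in a single move. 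The plan is to replace the hub clique by a sparse structure of diameter $\Theta(n)$ -- a long path or cycle of hubs -- so that each time the robber relocates within the core the cop must spend $\Theta(n)$ rounds travelling to the hub guarding his new part, and the robber simply waits out that journey.

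Concretely, I would first choose parameters so that the core is a complete $k$-partite graph with $k = \Theta(n)$ parts of bounded size -- this forces $\Theta(n^2)$ core edges and a hub path of length $\Theta(n)$ -- together with panic vertices and whatever further boundary gadgets turn out to be needed. The robber fixes an Eulerian circuit of the core (which must be chosen so that, for a constant fraction of steps, consecutive parts are $\Theta(n)$ apart along the hub path) and plays as in Theorem~\ref{thm:capt_lower}: he stays put until the cop occupies the hub of his current part, then advances one step along the circuit. One then argues that until the circuit is exhausted the cop is never within distance $1$ of the robber, so the game lasts at least (number of core edges burned) $\times$ (repositioning time) $= \Omega(n^3)$ rounds. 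Finally one checks that $\burn(G) = 1$ by exhibiting a winning cop strategy -- essentially the herding strategy itself, now taking $\Theta(n^3)$ rounds -- and notes that $|V(G)| = \Theta(n)$, so the bound is a genuine $\Omega(n^3)$ in the order of $G$.

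The crux -- and the reason this remains a conjecture -- is proving that the cop cannot capture the robber early, and this splits into two requirements that pull against each other. To make repositioning genuinely cost $\Theta(n)$ rounds the cop must be confined to the hub structure: if she could step into the core she would both shortcut between hubs (the core has diameter $O(1)$) and threaten the robber directly, collapsing the bound back to $\Theta(n^2)$. Deterring her from the core requires escape routes reachable from it, exactly as the panic vertices do in Theorem~\ref{thm:capt_lower}. But a slow cop hands the robber a $\Theta(n)$-round window in which, from a position adjacent to the hub $v_j$ of his part, he may slip to $u_j$ and isolate himself permanently -- precisely the phenomenon that drives $\burn$ above $1$ (compare the failure of a single cop on $P_n$ in Proposition~\ref{prop:simple}(c) once the panic vertex is ``far enough'' from her). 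So the boundary must be redesigned so that the escape routes are still threatening enough that the cop dares not enter the core, yet are reachable only from a bounded region that the herding cop always covers, even though her overall motion is slow.

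The main technical obstacle is therefore the design of this boundary, and I expect it to require: (i) giving each escape route enough internal structure that disconnecting it costs the robber several moves rather than one; (ii) arranging the hub path so that the cop's repositioning route always sweeps within distance $1$ of the escape route of the robber's part shortly before he could complete an isolation there; and (iii) ruling out robber strategies cleverer than the Eulerian‑circuit one -- in particular strategies that abandon the circuit to chip away at isolating a single core vertex across many stalling windows, which the parameters must forbid by making each window strictly shorter than the number of moves needed to isolate any core vertex (achievable since core vertices have degree $\Theta(n)$). Turning (i)--(iii) into a single consistent construction, and verifying both the robber's $\Omega(n^3)$ lower bound and the cop's winning strategy against it, is the heart of the problem.
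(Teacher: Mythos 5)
The statement you are addressing is stated in the paper as Conjecture~\ref{conj_capt} and is left open there; the paper offers no proof for you to be measured against. Judged on its own terms, your submission is not a proof but a research programme, and you say as much yourself (``the reason this remains a conjecture''). No graph is actually constructed: the parameters of the core, the structure of the hub path, and above all the ``boundary gadgets'' are left unspecified, and the three obstacles you label (i)--(iii) are exactly the content that a proof would have to supply. In particular, nothing in the proposal establishes either of the two claims that would constitute the theorem: that $\burn(G)=1$ for your (unspecified) graph, and that the robber can survive $\Omega(n^3)$ rounds on it.

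That said, your diagnosis of the difficulty is accurate and worth recording. The construction in Theorem~\ref{thm:capt_lower} loses only $O(1)$ rounds per burned edge precisely because the hubs $v_1,\dots,v_k$ form a clique, so the cop re-threatens the robber in one step; to reach $\Omega(n^3)$ one must make the cop's repositioning cost $\Theta(n)$, and you correctly observe that this collides head-on with keeping $\burn(G)=1$: a cop who needs $\Theta(n)$ rounds to reach the hub of the robber's current part hands the robber a window in which to reach a pendant escape vertex and isolate himself, which is exactly the mechanism behind $\burn(P_n)=2$ for $n\ge 6$ in Proposition~\ref{prop:simple}(c). You also rightly flag that the cop must be deterred from shortcutting through the dense core (which has diameter $O(1)$), and that robber strategies other than the Eulerian circuit --- e.g.\ slowly isolating a single high-degree core vertex across many stalling windows --- must be excluded. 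But identifying these tensions is where your argument stops; resolving them is the conjecture. As written, there is no construction to verify and hence no proof.
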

\end{section}

% future work
\begin{section}{Open Problems}\label{sec:open}

We conclude the paper by suggesting a few directions for future research on the bridge-burning game.
\begin{itemize}
\medskip
\item {\bf Characterize the graphs with bridge-burning cop number 1}.  A nice structural characterization is known for graphs with cop number 1 under the usual model (see~\cite{NW83}, \cite{Qui78}), but we have no such characterization for the bridge-burning model.  One principal difficulty in tackling this problem is that in the bridge-burning model, the graph changes as the game progresses, so any structural properties satisfied by the graph at the beginning of the game need not be satisfied throughout the game.

\medskip
\item {\bf Determine the asymptotics of $\burn(T_{m,n})$ and $\burn(G_{m,n})$}.  We have shown that both parameters are asymptotically $c \cdot mn$ for some constant $c$ between 112 and 121; could it be that both are asymptotically $mn/112$?

\medskip
\item {\bf Study the game on grids of arbitrary dimension}.  Theorem~\ref{thm:hypercube} provides a first step toward this problem, but it is not clear how or if the techniques used therein would extend to grids with larger side lengths.

\medskip
\item {\bf Examine Cartesian products of general trees and/or cycles}.  The cop numbers for products of trees and for products of cycles have been completely determined under the usual model of Cops and Robbers (see~\cite{MM87,NN93}) as well as for several variants.  It would be interesting to see how the situation differs in the bridge-burning model.\\
\end{itemize}
\end{section}

% Here's an old bibliography with a few of the standard Cops and Robbers references -- maybe some will be relevant.

\end{document}